\title[{Birational boundedness of RC logCY pairs}]{Birational boundedness of rationally connected log Calabi--Yau pairs with fixed index}
\date{\today}
\author{Jingjun Han}
\address{\rm Shanghai Center for Mathematical Sciences, Fudan University, Shanghai 200438, China}
\email{hanjingjun@fudan.edu.cn}
\author{Chen Jiang}
\address{\rm Shanghai Center for Mathematical Sciences \& School of Mathematical Sciences, Fudan University, Shanghai 200438, China}
\email{chenjiang@fudan.edu.cn}
\newcommand{\bQ}{{\mathbb Q}}
\newcommand{\Center}{\operatorname{Center}}
\newcommand{\Zz}{\mathbb{Z}}
\newcommand{\fR}{\mathfrak{R}}
\newcommand{\Rr}{\mathbb{R}}
\newcommand{\cN}{\mathcal{N}}
\newcommand{\Supp}{\operatorname{Supp}}
\newcommand{\mult}{\operatorname{mult}}
\newcommand{\bM}{\mathbf{M}}
\newcommand{\bN}{\mathbf{N}}
\newcommand{\Qq}{\mathbb{Q}}
\newcommand{\lf}{\lfloor}
\newcommand{\rf}{\rfloor}
\newtheorem{thm}{Theorem}[section]
\newtheorem{lem}[thm]{Lemma}
\newtheorem{cor}[thm]{Corollary}
\newtheorem{prop}[thm]{Proposition}
\newtheorem{conj}[thm]{Conjecture}
\theoremstyle{definition}
\newtheorem{defn}[thm]{Definition}
\newtheorem{rem}[thm]{Remark}
\theoremstyle{remark}
\begin{document}
\begin{abstract} 
We show that the set of rationally connected projective varieties $X$ of a fixed dimension such that $(X,B)$ is klt, and $-l(K_X+B)$ is Cartier and nef for some fixed positive integer $l$, is bounded modulo flops. 
\end{abstract}
\maketitle

\numberwithin{equation}{section}

 \section{Introduction}
 Throughout this paper, we work over an uncountable algebraically closed field of characteristic $0$, for instance, the complex number field $\mathbb{C}$.

A normal projective variety $X$ is a {\it Fano} (resp. {\it Calabi--Yau}) variety if $-K_X$ is ample (resp. $K_X\sim_{\mathbb{Q}} 0$). According to the Minimal Model Program, Fano varieties and Calabi--Yau varieties form fundamental classes in birational geometry as building blocks of algebraic varieties. Hence, it is interesting to ask whether such kinds of varieties satisfy any finiteness or boundedness properties. For Fano varieties, Birkar \cite{Bir19, Bir21} showed that the set of $\epsilon$-lc Fano varieties of a fixed dimension forms a bounded family for a fixed $\epsilon>0$, which is known as the Borisov--Alexeev--Borisov (BAB) Conjecture. For (log) Calabi--Yau varieties, things get more complicated. It is expected that certain Calabi--Yau varieties with special geometric properties, for example, Calabi--Yau manifolds or rationally connected Calabi--Yau varieties, form a bounded family, see \cite{DCS21, CDCHJS21, Jia21, BDCS20, FHS21, Jiao22} for some recent progress. 
 
 In this paper, we focus on the following conjecture of M\textsuperscript{c}Kernan and Prokhorov. It includes rationally connected Calabi--Yau varieties, and is a natural generalization of the BAB conjecture as any klt Fano variety is rationally connected \cite{Zha06,HM07}. 
\begin{conj}[{\cite[Conjecture~3.9]{MP04}}]\label{conj: MP} Fix a positive
integer $d$ and a positive real number $\epsilon$. Then the set of varieties $X$ such that
\begin{enumerate}
 \item $X$ is a normal projective variety of dimension $d$,
 \item $X$ is rationally connected,
\item $(X, B)$ is $\epsilon$-lc for some  $\mathbb{R}$-divisor $B\geq 0$, and
\item $-(K_X+B)$ is nef,
\end{enumerate}
is bounded.
\end{conj}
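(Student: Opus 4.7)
The plan is to reduce Conjecture~\ref{conj: MP} to the main theorem advertised in the abstract (boundedness modulo flops for rationally connected klt pairs with $-l(K_X+B)$ Cartier and nef) and then to remove the ``modulo flops'' qualifier by exploiting the strict $\epsilon$-lc bound. First I would convert an arbitrary $\epsilon$-lc nef pair into a log Calabi--Yau pair of uniformly bounded Cartier index. By Birkar's boundedness of $N$-complements for $\epsilon$-lc pairs (in the $\mathbb{R}$-complement refinement of Birkar and Han--Liu--Shokurov, when $B$ has irrational coefficients), there exist $N = N(d,\epsilon)\in\mathbb{Z}_{>0}$ and an effective $\mathbb{Q}$-divisor $\Delta\geq B$ such that $(X,\Delta)$ is lc, $K_X+\Delta\sim_{\mathbb{Q}} 0$, and $N(K_X+\Delta)$ is Cartier. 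Because $(X,B)$ is $\epsilon$-lc, hence $(\epsilon-\delta)$-klt for small $\delta>0$, a small perturbation turns $(X,\Delta)$ into a klt pair while enlarging $N$ only by a bounded factor.

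With this structure, the pair $(X,\Delta)$ satisfies the hypotheses of the paper's main theorem with $l=N$: rational connectedness is given, $(X,\Delta)$ is klt, and $-l(K_X+\Delta)\sim 0$ is Cartier and nef. The theorem therefore places $X$ in a family bounded modulo flops. To upgrade this to honest boundedness, I would invoke the original $\epsilon$-lc datum on $(X,B)$, not on $(X,\Delta)$. On a common log resolution $Y$ of two small modifications $X\dashrightarrow X'$ in the same flop class, the pulled-back $B$ agrees because the indeterminacy is codimension at least two; thus the $\epsilon$-lc property is invariant across the flop equivalence. Combining this with finiteness of minimal models for klt pairs of bounded Cartier index (Shokurov-style finiteness, together with the fact that a rationally connected klt log Calabi--Yau variety with a nef anticanonical perturbation is of Fano type, so admits only finitely many small $\mathbb{Q}$-factorial modifications) would confine each flop class to a finite bounded subfamily, giving genuine boundedness.

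The decisive obstacle is the final step. The ``modulo flops'' conclusion is genuinely weaker than the conjecture: already smooth rationally connected Calabi--Yau threefolds can admit infinitely many flops, so some extra input from $\epsilon>0$ is indispensable. The most natural route is to combine the bounded family produced in Step 2 with an effective bound on the Picard rank of $\epsilon$-lc Fano-type varieties of dimension $d$, together with Kawamata's cone theorem bounding the length of extremal rays by $2d$, to limit the number of flopping contractions uniformly. A second, subtler issue is already present in Step 1: one needs the complemented pair to be klt, not merely lc, in a way that is uniform across the family; this is exactly where the strict inequality $\epsilon>0$ (rather than a mere klt assumption) must be used, and losing it would reduce to a known difficult open case of the log Calabi--Yau index conjecture.
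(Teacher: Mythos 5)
The statement you were given is Conjecture~\ref{conj: MP}, which the paper does \emph{not} prove: it is recorded as an open conjecture of M\textsuperscript{c}Kernan and Prokhorov, known in dimension $2$ and, modulo flops, in dimension $3$. The paper's actual result, Theorem~\ref{thm: main pair rc modulo flop}, is weaker on exactly the two counts your proposal tries to bridge: it \emph{assumes} that $-l(K_X+B)$ is Cartier for a fixed $l$, and it concludes only boundedness modulo flops. Both of your bridging steps have genuine gaps.

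Your Step 1 invokes boundedness of complements to produce a klt $\mathbb{Q}$-complement $(X,\Delta)$ with $N(K_X+\Delta)\sim 0$ for $N=N(d,\epsilon)$. Birkar's complement theorems require $X$ to be of Fano type (over the base). A rationally connected variety with $-(K_X+B)$ nef need not be of Fano type: for instance a rationally connected klt variety with $K_X\equiv 0$, where $-K_X$ is not big. In that regime the existence of a bounded-index complement is essentially the index conjecture for rationally connected log Calabi--Yau pairs, which the paper points out is widely open in dimension at least $4$ (see the discussion citing \cite[Conjecture~1.3]{CDCHJS21}); fixing the index is precisely the extra hypothesis the authors impose to avoid this. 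Your parenthetical fix --- ``a small perturbation turns $(X,\Delta)$ into a klt pair while enlarging $N$ only by a bounded factor'' --- is not a construction: decreasing $\Delta$ to restore kltness destroys $K_X+\Delta\sim_{\mathbb{Q}}0$, and no uniform mechanism is offered.

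Your Step 3 rests on the claim that a rationally connected klt log Calabi--Yau variety ``with a nef anticanonical perturbation'' is of Fano type and hence has finitely many small $\mathbb{Q}$-factorial modifications. This is false for the same reason: Fano type requires $-K_X$ big, and when $K_X+B\equiv 0$ with $-K_X$ not big, finiteness of minimal models is unavailable and infinitely many marked small modifications can occur (this is the territory of the Kawamata--Morrison cone conjecture, also open). The $\epsilon$-lc hypothesis controls singularities, not the number of flop classes, and bounding Picard rank plus the length of extremal rays does not bound the number of small modifications. This is exactly why the paper, and the prior work it builds on, stops at birational boundedness. In short, the proposal does not close either gap, and the statement remains a conjecture.
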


Conjecture~\ref{conj: MP} was proved in dimension $2$ by \cite[Theorem~6.9]{Ale94}. When $\dim X=3$, \cite[Theorem~1.6]{BDCS20} showed that $X$ is bounded modulo flops (see \S\ref{sec: boundedness} for definition). Indeed this holds for any non-canonical klt pair $(X,B)$ when the coefficients of $B$ belong to a set satisfying the descending chain condition, without assuming $X$ being rationally connected, see \cite[Theorem 1.9]{HLL22}, \cite[Theorem 1.6]{Jia21}. Even the case $K_X\equiv 0$ and $B=0$ is sufficiently interesting and widely open in dimension at least $4$ \cite[Conjecture~1.3]{CDCHJS21}.

In this paper, we prove that $X$ is bounded modulo flops in Conjecture~\ref{conj: MP} under an additional assumption that the Cartier index of $K_X+B$ is fixed. The following is our main theorem.

 \begin{thm}\label{thm: main pair rc modulo flop}
 Fix positive integers $d$ and $l$. The set of varieties $X$ such that
 \begin{enumerate}
 \item $X$ is a normal projective variety of dimension $d$,
 \item $X$ is rationally connected,
 \item $(X, B)$ is klt for some $\mathbb{Q}$-divisor $B\geq 0$, and
 \item $-l(K_X+B)$ is Cartier and nef,
 \end{enumerate}
 is bounded modulo flops.
 \end{thm}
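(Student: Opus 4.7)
Since $l(K_X+B)$ is Cartier and $(X,B)$ is klt, every log discrepancy of $(X,B)$ lies in $\tfrac{1}{l}\mathbb{Z}$, so $(X,B)$ is automatically $\tfrac{1}{l}$-lc. This uniform $\epsilon$-lc control, combined with Birkar's BAB, already handles the sub-case where $-(K_X+B)$ is additionally big (so $X$ is of Fano type). The essential difficulty is therefore the case where $-l(K_X+B)$ is nef but not big, in which the semiample model of $-l(K_X+B)$ (when available) gives a nontrivial contraction $X\to Y$ along which $-(K_X+B)$ is trivial.

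The plan, mirroring the paper's title, is to reduce the statement to a boundedness-modulo-flops theorem for \emph{rationally connected lc log Calabi--Yau pairs of fixed index}. Concretely, I would produce $N = N(d,l)$ and $B^+\geq B$ with $(X,B^+)$ lc and $N(K_X+B^+)\sim 0$, i.e.\ a uniformly bounded $N$-complement, and then apply such a boundedness-modulo-flops theorem for rationally connected lc log Calabi--Yau pairs of index dividing $N$; this is presumably the central technical content of the paper. Producing the $N$-complement is delicate because $X$ need not be of Fano type, but it can be arranged by first running a $(K_X+B)$-MMP (using the $\tfrac{1}{l}$-lc property to control singularities along the MMP) to reach a Mori fiber space $X'\to Z$ of Fano type, invoking Birkar's boundedness of complements on $X'$ relative to $Z$, and pulling back.

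The main obstacle is the boundedness-modulo-flops statement for rationally connected lc log Calabi--Yau pairs of fixed index. The expected approach is induction on dimension via the canonical bundle formula: the MRC quotient, or an Iitaka fibration attached to a suitable semiample Cartier divisor extracted from $-l(K_X+B^+)$, gives a contraction $X\to Y$ along which $K_X+B^+$ is trivial; effective adjunction in the style of Filipazzi--Birkar then produces a generalized pair $(Y, B_Y+\bM_Y)$ of uniformly bounded Cartier index, to which a generalized BAB for generalized pairs applies and bounds $Y$. Boundedness of the general fiber — again a rationally connected lc log Calabi--Yau pair of bounded index, but of smaller dimension — follows by induction. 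The key technical hurdles are the effective adjunction producing a uniformly bounded Cartier index for the moduli part on the base, and the transition from boundedness of suitable crepant birational models back to boundedness modulo flops of $X$ itself.
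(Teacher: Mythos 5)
Your reduction has a genuine gap: the intermediate statement you want to quote --- boundedness modulo flops of rationally connected \emph{lc} log Calabi--Yau pairs of fixed index --- is false, and it is not what this paper proves. For a counterexample already in dimension $2$ and index $1$, take $X=\mathbb{F}_n$ with $B^+=C_0+C_\infty+f_1+f_2$ (the two sections and two fibers): $(X,B^+)$ is log smooth lc with $K_X+B^+\sim 0$, $X$ is rational, yet $\{\mathbb{F}_n\}_{n\ge 1}$ is unbounded, and for surfaces ``bounded in codimension one'' coincides with bounded. Passing from the klt pair $(X,B)$ with $l(K_X+B)$ Cartier to an lc $N$-complement $(X,B^+)$ destroys exactly the hypothesis (the $\tfrac1l$-lc-ness of a structure with \emph{trivialized} log canonical class) that makes the theorem true. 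The paper avoids complements altogether: since $-l(K_X+B)$ is nef Cartier, one sets $\bM:=\overline{-(K_X+B)}$, so that $(X,B+\bM)$ is a \emph{klt} g-pair with $l(K_X+B+\bM_X)\sim 0$ and $l\bM$ b-Cartier, and Theorem~\ref{thm: main pair rc modulo flop} is then an immediate consequence of Theorem~\ref{thm main}; no non-vanishing or complement existence is needed, and klt-ness is preserved and used essentially later (e.g.\ Lemma~\ref{lem exist E}, which relies on \cite[Lemma 6.3]{Jia21} for rationally connected klt varieties with numerically trivial canonical class).

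Two further steps of your plan would also fail as stated. First, your construction of the $N$-complement runs a $(K_X+B)$-MMP ``to reach a Mori fiber space of Fano type,'' but in the central case $K_X+B$ is pseudo-effective (e.g.\ $K_X+B\equiv 0$), the MMP ends with a minimal model and there is no Fano-type base over which Birkar's boundedness of complements applies; the Fano-type situation is precisely the easy sub-case already covered by BAB/\cite{Bir18}, and even $l(K_X+B)\sim 0$ does not follow from $K_X+B\equiv 0$ without a separate argument. Second, in your inductive scheme, bounding the base (via a canonical bundle formula of bounded index plus a BAB-type statement) and bounding the general fiber separately does not bound the total space $X$. This is why the paper proves the relative Theorem~\ref{thm main rel} for base-polarized fibrations, splits according to whether $K_X$ is pseudo-effective over $Z$, uses Corollary~\ref{cor:cbfindex rc} to descend a klt g-pair of bounded index to the base, bounds the resulting Fano-type fibration by Theorem~\ref{thm: bir18 special}, and then transfers boundedness back to $X$ through Lemmas~\ref{lift flops}, \ref{bdd contraction} and \ref{lem 1.4 bir}; some device of this kind is indispensable and is missing from your outline.
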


In particular, when $l(K_X+B)\sim 0$, Theorem~\ref{thm: main pair rc modulo flop} gives a simpler proof for \cite[Theorem~1.4]{BDCS20}. This special case is already interesting enough as it implies the birational boundedness of elliptic Calabi--Yau varieties with a rational section \cite[Theorems~1.2 and 1.3]{BDCS20}. 

Both the proof of \cite[Theorem~1.4]{BDCS20} and ours heavily rely on Birkar's earlier work on log Calabi--Yau fibrations \cite{Bir18} to finish the induction on the dimension. The difference is that \cite{BDCS20} applies towers of Fano fibrations introduced in \cite{DCS21} which makes the proof quite involved, and the main new ingredient of ours is to use generalized pairs (see \S\ref{sec: gpair}) introduced in \cite{BZ16} to reduce the problem to log Calabi--Yau g-pairs, and then to apply the theory of complements to control the index of the moduli part of the canonical bundle formula. We prove the following theorem under this general setting which fits well with dimension induction.
 
 \begin{thm}\label{thm main}
 Fix positive integers $d$ and $l$. The set of varieties $X$ such that
 \begin{enumerate}
 \item $X$ is a normal projective variety of dimension $d$,
 \item $X$ is rationally connected,
 \item $(X, B+{\bf M})$ is a klt g-pair for some  $\mathbb{Q}$-divisor $B\geq 0$ and b-$\mathbb{Q}$-divisor ${\bf M}$,
 \item $l(K_X+B+{\bf M}_X)\sim 0$, and
 \item $l{\bf M}$ is b-Cartier,
 \end{enumerate}
 is bounded modulo flops.
 \end{thm}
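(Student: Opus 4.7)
The plan is to proceed by induction on the dimension $d$; the base case $d=1$ is trivial. After replacing $X$ by a small $\bQ$-factorialization, which preserves boundedness modulo flops, I may assume $X$ is $\bQ$-factorial. Since $B\geq 0$ and $\bM$ is nef, the divisor $-K_X\sim_{\bQ}B+\bM_X$ is pseudo-effective, and I would split into two cases according to whether $-K_X$ is big.

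If $-K_X$ is big, then $X$ is of Fano type. The Cartier index constraint $l(K_X+B+\bM_X)\sim 0$ together with Birkar's theory of bounded complements for generalized pairs should produce a uniform $\epsilon>0$ so that an $\epsilon$-lc pair structure exists on $X$; then the BAB theorem of \cite{Bir19, Bir21} yields the (stronger) boundedness of $X$ directly.

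If $-K_X$ is not big, rational connectedness together with a suitable MMP on a slight perturbation of $K_X+B$ should produce a nontrivial Mori fiber space $X\dashrightarrow X'\to Z$ with $Z$ rationally connected of smaller dimension. Applying the canonical bundle formula for klt generalized pairs then yields a klt g-pair $(Z,B_Z+\bN)$ with $K_{X'}+B'+\bM_{X'}\sim_{\bQ} f^{*}(K_Z+B_Z+\bN_Z)$. The key technical input is Birkar's theorem on bounded complements for g-pairs in dimension $d$, which supplies a positive integer $l'$ depending only on $d$ and $l$ such that $l'(K_Z+B_Z+\bN_Z)\sim 0$ and $l'\bN$ is b-Cartier; the induction hypothesis then produces boundedness of $Z$ modulo flops. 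To finish, one bounds the general fiber $F$ of $f$, a rationally connected Fano type variety with $-l(K_F+B|_F)$ Cartier and nef, by combining BAB with the complements package, and then glues boundedness of base and fiber into boundedness of $X$ modulo flops.

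The main obstacle I expect is the uniform control of the moduli b-divisor $\bN$ after the canonical bundle formula: this is precisely the step that forces the generalized pair framework, and it requires the full strength of boundedness of complements for g-pairs together with Birkar's structure theory for log Calabi--Yau fibrations from \cite{Bir18}. A secondary subtlety is descending effective birationality data (very ample divisors computing embeddings) from $Z$ and from the generic fiber to the total space $X$, so that boundedness of base and fiber actually assembles into a bounded family modulo flops rather than merely a bounded family up to birational equivalence.
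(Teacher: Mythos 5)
There is a genuine gap at the heart of your non-big case. You split according to whether $-K_X$ is big and claim that otherwise ``a suitable MMP on a slight perturbation of $K_X+B$'' produces a Mori fiber space. But the hardest instance of the theorem lives exactly where this fails: take $B=0$, $\bM=0$ and $X$ a klt rationally connected variety with $K_X\equiv 0$ (such varieties exist once singularities are allowed). Then $K_X$ is pseudo-effective and numerically trivial, so no run of the MMP on $K_X+B$, nor on any perturbation by an effective or ample divisor, ever reaches a Mori fiber space; your induction never gets off the ground. The paper's way through this case is Lemma~\ref{lem exist E}: on a (relative) good minimal model the general fiber $F$ is rationally connected klt with $K_F\equiv 0$, hence \emph{non-canonical} by \cite[Lemma 6.3]{Jia21}, so there exists a divisor $E_0$ with $a(E_0,X,B)<1$ dominating the base; extracting $E_0$ by \cite[Corollary 1.4.3]{BCHM10} makes $K_Y$ not pseudo-effective over the base, and only then does the Mori fiber space case apply to $(Y,B_Y+(1-a)E_0+\bM)$. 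One then still has to descend boundedness in codimension one from $Y$ back along the extraction, which is Lemma~\ref{bdd contraction} (log boundedness of the models, relative MMPs in families via \cite{HMX18}); none of this appears in your outline, and the same non-canonicity/extraction trick is also what makes the index statement (Corollary~\ref{cor:cbfindex rc}) available for rationally connected rather than merely Fano-type fibrations.

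The second problem is the ``glue base and fiber'' step, which you flag as a subtlety but do not resolve; it is not formal, since boundedness of the general fiber and of the base (only modulo flops, with no polarization) does not yield boundedness of the total space. The paper's entire architecture exists to make this gluing work: it proves the relative base-polarized statement Theorem~\ref{thm main rel} by induction on $\dim X-\dim Z$, uses Lemma~\ref{lift flops} to lift the flops of the base $Z'$ (which the inductive hypothesis only bounds in codimension one) to birational modifications of $X'$ preserving the Fano-type contraction, then applies Theorem~\ref{thm: bir18 special} (= \cite[Theorem 2.2]{Bir18}) to the Fano-type fibration over the now honestly bounded polarized base $(Z',A')$ with $A'^{\dim Z'}$ bounded and $A'-\tfrac{1}{l}f^*L$ ample, and finally transfers back along birational contractions via Lemmas~\ref{lem 1.4 bir} and \ref{lem bounded chain}. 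Your induction is set up only for the absolute statement, so at the inductive step you have neither the polarization on $Z$ nor a mechanism to pass from ``$Z$ bounded modulo flops'' to a bounded fibration structure on $X$. In short, the skeleton (canonical bundle formula with controlled index of the moduli part, MFS case, Fano-type boundedness) matches the paper, but the two load-bearing ingredients --- the non-canonicity/extraction argument when $K_X$ is pseudo-effective, and the relative polarized formulation with flop-lifting that closes the induction --- are missing.
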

 
 In fact, we can more generally prove the following relative version of Theorem~\ref{thm main}. See \S 2 for definitions.
 \begin{thm}\label{thm main rel}
 Fix positive integers $d, l$ and $r$. The set of base-polarized fibrations $\pi: X\to (Z, A)$ such that
 \begin{enumerate}
 \item $X$ is a normal projective variety of dimension $d$,
 \item $\pi:X\to Z$ is a rationally connected fibration,
 \item $(X, B+{\bf M})$ is a klt g-pair for some  $\mathbb{Q}$-divisor $B\geq 0$ and b-$\mathbb{Q}$-divisor ${\bf M}$,
 \item $l(K_X+B+{\bf M}_X)\sim \pi^*L$ for some Cartier divisor $L$ on $Z$,
 \item $l{\bf M}$ is b-Cartier,
 \item $A\ge0 $ is a very ample divisor on $Z$ such that $A^{\dim Z}\leq r$, and
 \item $A-\frac{1}{l}L$ is ample,
 
 \end{enumerate}
 is bounded in codimension one. In particular, such $X$ is bounded in codimension one.
 \end{thm}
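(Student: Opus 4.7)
The plan is to argue by induction on $d$, using the canonical bundle formula for generalized klt pairs to transport the problem onto the base $Z$, and the theory of complements to bound the Cartier index of the induced moduli b-divisor. The boundedness of $Z$ itself is immediate: $A$ is very ample with $A^{\dim Z}\le r$, so $Z$ varies in a bounded family. As a preliminary normalization I may assume $K_X+B+\mathbf{M}_X\sim_{\mathbb{Q}}\pi^*A$: by (7), $lA-L$ is an ample Cartier divisor on $Z$; after taking a bounded multiple if necessary, I pick a general $H\in |lA-L|$ and replace $B$ by $B':=B+\tfrac{1}{l}\pi^*H$, which by Bertini keeps $(X,B'+\mathbf{M})$ klt and yields $l(K_X+B'+\mathbf{M}_X)\sim \pi^*(lA)$.

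Applying the canonical bundle formula for generalized klt pairs to $\pi$ then produces a g-pair $(Z,B_Z+\mathbf{N})$ with $K_X+B'+\mathbf{M}_X\sim_{\mathbb{Q}}\pi^*(K_Z+B_Z+\mathbf{N}_Z)$, so $K_Z+B_Z+\mathbf{N}_Z\sim_{\mathbb{Q}} A$. The coefficients of $B_Z$ lie in a finite set determined by $l$, and Birkar's theory of complements for generalized pairs yields a positive integer $l'=l'(d,l)$ such that $l'\mathbf{N}$ is b-Cartier and $l'(K_Z+B_Z+\mathbf{N}_Z)$ is Cartier. In parallel, the general fiber $F$ of $\pi$ is rationally connected of dimension $<d$ and carries a klt g-pair $(F,B|_F+\mathbf{M}|_F)$ with $l(K_F+B|_F+\mathbf{M}_F)\sim 0$ and $l\mathbf{M}|_F$ b-Cartier; by the induction hypothesis (Theorem~\ref{thm main} in lower dimension), $F$ lies in a bounded family modulo flops.

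With $Z$ bounded, the g-pair structure on the base of bounded Cartier index, and the general fiber bounded modulo flops, I construct a very ample divisor on $X$ of bounded degree---a bounded multiple $m\pi^*A$ plus a bounded twist coming from a complement of the boundary---to place $X$ into a bounded Hilbert scheme up to codimension one. A relative $(K_X+B'+\mathbf{M}_X)$-MMP over $Z$ identifies $X$ with this bounded model in codimension one, giving the boundedness in codimension one claimed in the theorem. The main obstacle is controlling the Cartier index of the moduli b-divisor $\mathbf{N}$ produced by the canonical bundle formula: this is exactly where the theory of complements for g-pairs is decisive, and it is why the generalized-pair framework of the present paper simplifies the earlier argument of \cite{BDCS20}.
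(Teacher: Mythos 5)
There is a genuine gap, and it sits exactly at the heart of the theorem. Your first two paragraphs are fine as far as they go: the normalization $K_X+B'+\mathbf{M}_X\sim_{\mathbb{Q}}\pi^*A$ is harmless (modulo the minor point that for $l=1$ the coefficient $\tfrac{1}{l}$ makes the pair only lc along $\pi^*H$, and that $lA-L$ need only be ample, not very ample), and a bounded-index canonical bundle formula for rationally connected fibrations is indeed available (this is Corollary~3.4 of the paper, whose proof itself already requires running MMPs and extracting divisors). But your third paragraph is where the actual proof should be, and it is not an argument. Knowing that $Z$ is bounded, that $(Z,B_Z+\mathbf{N})$ has bounded index, and that the general fiber $F$ is bounded modulo flops does \emph{not} yield a very ample (or even big) divisor of bounded degree on $X$: $\pi^*A$ is not big when $\dim Z<d$, a ``bounded twist coming from a complement of the boundary'' is not a construction, and boundedness of base plus fibers is well known to be far from boundedness of the total space --- this is precisely the difficulty that makes rationally connected log Calabi--Yau fibrations hard (compare elliptic Calabi--Yau fibrations, where base and fiber are trivially bounded). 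Likewise, the concluding relative $(K_X+B'+\mathbf{M}_X)$-MMP over $Z$ is vacuous, since that divisor is already trivial over $Z$, and there is no previously constructed bounded model for it to compare $X$ with.

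What your outline is missing is the mechanism the paper uses to reach a situation where boundedness theorems actually apply, namely Birkar's result on Fano type log Calabi--Yau fibrations (\cite[Theorem~2.2]{Bir18}, Theorem~3.1 in the paper), which requires $-K_X$ to be big over the base. The paper gets there by induction on the \emph{relative} dimension $\dim X-\dim Z$ and a dichotomy on whether $K_X$ is pseudo-effective over $Z$: if not, a relative $K_X$-MMP produces a Mori fiber space $X'\to Z'$, the bounded-index CBF plus induction bounds $Z'\to(Z,A)$, and then $X'\to Z'$ is of Fano type so \cite{Bir18} applies; if $K_X$ is pseudo-effective over $Z$, the key input is that a rationally connected klt variety with numerically trivial canonical class is non-canonical (\cite[Lemma~6.3]{Jia21}), which lets one extract a divisor $E_0$ with log discrepancy $<1$ dominating the base, making $K_Y$ not pseudo-effective over the base and reducing to the first case; one then descends the boundedness back through the extraction and the minimal model (Lemmas~3.2, 3.6 and 3.8 of the paper) to control $X$ itself in codimension one. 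None of this machinery --- the pseudo-effectivity dichotomy, the non-canonicity of rationally connected $K$-trivial fibers, the reduction to Fano type fibrations, and the lemmas transporting boundedness across birational contractions and flops --- appears in your proposal, and without it the passage from bounded base and fibers to boundedness of $X$ does not go through.
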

 Here if $\dim Z=0$, then we always take $L=A=A^{\dim Z}=0$ by default.

If $-K_X$ is big over $Z$, then Theorem~\ref{thm main rel} is a special case of {\cite[Theorem~2.2]{Bir18}} (see Theorem~\ref{thm: bir18 special}). So here in Theorem~\ref{thm main rel} we treat the general setting that $X\to Z$ is rationally connected, and we expect that even if the torsion index of $K_X+B+{\bf M}_X$ over $Z$ and the b-Cartier index of ${\bf M}$ are not assumed to be bounded, $X\to Z$ should still belong to a bounded family. Conjecture \ref{conj: rel MP} is a generalization of \cite[Theorems~1.2, 2.2]{Bir18}, which could be regarded as the relative version of Conjecture \ref{conj: MP}.
 
 \begin{conj}\label{conj: rel MP}
 Let $d, r$ be positive integers, and $\epsilon$ a positive real number. Then the set of base-polarized fibrations $X\to (Z, A)$ such that there exists a $(d, r, \epsilon)$-logCY rationally connected fibration $(X, B+{\bf M})\to (Z, A)$, is bounded. 
 \end{conj}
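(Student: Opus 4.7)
We argue by induction on $d = \dim X$, using Theorem~\ref{thm main} as the induction hypothesis in all lower dimensions (it is the special case $\dim Z = 0$ of Theorem~\ref{thm main rel}). If $\dim Z = 0$ the conclusion is Theorem~\ref{thm main}, so we assume $0 < \dim Z < d$. The general fiber $F$ of $\pi$ is rationally connected of dimension $h = d - \dim Z < d$, and restriction gives a klt g-pair $(F, B|_F + {\bf M}|_F)$ with $l(K_F + B|_F + {\bf M}_F|_F) \sim 0$ and $l{\bf M}|_F$ b-Cartier. By the induction hypothesis, the set of such $F$ is bounded modulo flops.

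Since $(X, B + {\bf M})$ is klt and $K_X + B + {\bf M}_X$ is $\pi$-numerically trivial, the canonical bundle formula for generalized pairs (Filipazzi, Han--Liu) produces a klt g-pair $(Z, B_Z + {\bf N})$ with
\[
K_X + B + {\bf M}_X \sim_{\bQ} \pi^{*}(K_Z + B_Z + {\bf N}_Z),
\]
and in particular $l(K_Z + B_Z + {\bf N}_Z) \sim_{\bQ} L$. The crucial technical step is to control the denominators of $B_Z$ and the b-Cartier index of ${\bf N}$. Combining the boundedness modulo flops of the generic fiber with the theory of complements for generalized pairs (after Birkar, and Han--Liu--Shokurov), we extract an integer $l' = l'(d, l)$ such that the coefficients of $B_Z$ lie in a finite set depending only on $d$ and $l$, $l'{\bf N}$ is b-Cartier, and $l'(K_Z + B_Z + {\bf N}_Z) \sim L'$ for some Cartier divisor $L'$, with the torsion difference from $(l'/l)L$ absorbed by enlarging $l'$ once the boundedness of $Z$ as a polarized variety with $A^{\dim Z} \le r$ is invoked.

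With these bounds in hand, the base $Z$ is bounded as a polarized variety and the data $(Z, B_Z + {\bf N})$ is bounded in codimension one. To recover $X$ itself, we combine the bounded base with the generic fibers bounded modulo flops: the effective relative complement of $(X, B + {\bf M})$ over $Z$ (whose $N$-index is bounded in terms of $(d, l)$ by the previous paragraph) allows one to construct, after a small birational modification of $X$, a $\pi$-very-ample divisor of bounded fiberwise degree, which combined with a sufficiently divisible multiple of $\pi^{*}A$ yields a very ample divisor on $X$ of bounded volume. Equivalently, the above reduction places $\pi : X \to Z$ (after possibly perturbing the boundary to enforce $-K_X$ big over $Z$) into the hypotheses of \cite[Theorem~2.2]{Bir18}, from which boundedness in codimension one follows.

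The principal obstacle is the effective bound on the Cartier index of the moduli b-divisor ${\bf N}$: even granted that the generic fiber is bounded modulo flops, the moduli part may a priori carry unbounded denominators coming from monodromy around singular fibers, and the theory of complements for generalized pairs is exactly what converts bounded fiber data into a uniformly bounded torsion index. A secondary technical difficulty is the final reassembly, where the global polarization on $X$ must be constructed compatibly with the small birational modifications inherent to the "bounded in codimension one" conclusion; this is handled by effective birationality arguments combined with Birkar's relative boundedness theorem.
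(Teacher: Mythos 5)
The statement you are trying to prove is stated in the paper as a \emph{conjecture} (Conjecture~\ref{conj: rel MP}), and the paper does not prove it; it only proves the strictly weaker Theorem~\ref{thm main rel}, in which one additionally fixes an integer $l$ with $l(K_X+B+\mathbf{M}_X)\sim\pi^*L$ Cartier and $l\mathbf{M}$ b-Cartier. Your argument quietly reintroduces exactly that extra hypothesis: already in the first paragraph you write ``$l(K_F+B|_F+\mathbf{M}_F|_F)\sim 0$ and $l\mathbf{M}|_F$ b-Cartier,'' but in Conjecture~\ref{conj: rel MP} there is no such $l$ --- the hypotheses are only that $(X,B+\mathbf{M})$ is $\epsilon$-lc with an $\mathbb{R}$-divisor boundary and $K_X+B+\mathbf{M}_X\sim_{\mathbb{R}}\pi^*L$. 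Consequently your appeal to the theory of complements to produce $l'=l'(d,l)$ with $l'\mathbf{N}$ b-Cartier and the coefficients of $B_Z$ in a finite set has no starting point: the complement/index results used in the paper (Proposition~\ref{prop:cbfindex}, Corollary~\ref{cor:cbfindex rc}) require $lB$ integral (or at least coefficients in a fixed finite/hyperstandard set) and $l\mathbf{M}$ b-Cartier as input, and bounding these indices \emph{without} such input is precisely the open content of the conjecture. The paper states this explicitly: ``we expect that even if the torsion index of $K_X+B+\mathbf{M}_X$ over $Z$ and the b-Cartier index of $\mathbf{M}$ are not assumed to be bounded, $X\to Z$ should still belong to a bounded family.'' So the crucial step of your plan is circular, and what you are actually sketching is (a rough outline of) the proof of Theorem~\ref{thm main rel}, not of the conjecture.

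There are two further mismatches even granting the fixed-index reduction. First, the conjecture asks for genuine boundedness of the base-polarized fibrations, whereas your final step only delivers boundedness in codimension one (as does Theorem~\ref{thm main rel}); these are different conclusions and the gap between them is not addressed. Second, the reassembly step (``perturbing the boundary to enforce $-K_X$ big over $Z$'' and then invoking \cite[Theorem~2.2]{Bir18}) is not available in general: when $K_X$ is pseudo-effective over $Z$ the fibration is not of Fano type, and the paper has to work around this with a genuinely different mechanism --- extracting a non-canonical divisor dominating the base via Lemma~\ref{lem exist E}, passing to a Mori fiber space, and then recovering $X$ up to flops through Lemma~\ref{bdd contraction}, Lemma~\ref{lift flops} and Lemma~\ref{lem 1.4 bir} --- rather than by perturbation. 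As written, your proposal does not prove the statement.
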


\noindent\textbf{Acknowledgments.} 
This work was supported by National Key Research and Development Program of China \#2023YFA1010600, NSFC for Innovative Research Groups \#12121001, and National Key Research and Development Program of China \#2020YFA0713200. We would like to thank Caucher Birkar, Guodu Chen, Christopher D. Hacon, Junpeng Jiao, Jihao Liu, and Yujie Luo for their interests and helpful comments.
The authors are members of LMNS, Fudan University.
We thank the referee for useful suggestions and comments. 
 
 \section{Preliminaries}

 A {\it contraction} or a {\it fibration} is a projective morphism $\pi: X\to Z$ between normal varieties such that $\pi_*\mathcal{O}_X = \mathcal{O}_Z$. In this case, for a closed point $z\in Z$, the fiber over $z$ is always denoted by $X_z$.
 
 A {\it base-polarized fibration} $\pi: X\to (Z, A)$
 consists of a fibration $\pi: X\to Z$ and a very ample divisor $A\ge0$ on $Z$.

 A {\it birational contraction} is a birational map $\phi: X\dashrightarrow X'$ between normal varieties such that $\phi_*^{-1}$ does not contract any divisors.
 A {\it small} birational map is a birational map $\phi: X\dashrightarrow X'$ between normal varieties which is isomorphic in codimension one.

 A projective variety is said to be {\it rationally connected} if any two general points can be connected by the image of a rational curve. A fibration is said to be {\it rationally connected} if its general fibers are rationally connected.
 
\subsection{Divisors and b-divisors}
Let $\mathbb{K}$ be either the rational number field $\Qq$ or the real number field $\Rr$.
 Let $X$ be a normal variety. A {\it $\mathbb{K}$-divisor} is a finite $\mathbb{K}$-linear combination $D=\sum_{i} d_{i} D_{i}$ of prime Weil divisors $D_{i}$, and $d_{i}$ denotes the {\it coefficient} of $D_i$ in $D$. A {\it $\mathbb{K}$-Cartier divisor} is a $\mathbb{K}$-linear combination of Cartier divisors. 


\begin{defn}[cf. \cite{PS09}]\label{defn bdiv}
Let $X$ be a normal variety. Consider an infinite linear combination $\mathbf{D}:=\sum_D d_D D$, where $d_D\in\mathbb{K}$ and the infinite sum runs over all divisorial valuations of the function field of $X$. For any birational model $Y$ of $X$, the \emph{trace} of $\mathbf{D}$ on $Y$ is defined by $\mathbf{D}_Y:=\sum_{\mathrm{codim}_Y D=1} d_D D $.
Such $\mathbf{D}$ is called 
a \emph{b-$\mathbb{K}$-divisor} (or \emph{b-divisor} for short when $\mathbb{K}$ is clear) 
if on each birational model $Y$ of $X$, the trace $\mathbf{D}_Y$ is a $\mathbb{K}$-divisor, or equivalently, $\mathbf{D}_Y$ is a finite sum. 

For a $\mathbb{K}$-Cartier divisor $D$ on $X$, the {\it Cartier closure} of $D$ is the b-divisor $\overline{D}$ whose trace on every birational model $f:Y\to X$ is $f^*D$. 
A b-divisor $\mathbf{D}$ is said to be
\emph{b-Cartier} if 
there is a birational model $X'$ over $X$ such that $\mathbf{D}_{X'}$ is Cartier and $\mathbf{D}=\overline{\mathbf{D}_{X'}}$.

When $X$ is projective, a b-divisor $\mathbf{D}$ is said to be
\emph{b-nef} if 
there is a birational model $X'$ over $X$ such that $\mathbf{D}_{X'}$ is $\mathbb{K}$-Cartier and nef, and $\mathbf{D}=\overline{\mathbf{D}_{X'}}$.

\end{defn}

 \subsection{Singularities of g-pairs}\label{sec: gpair}
 
 \begin{defn}
A \emph{generalized pair} ({\it g-pair} for short) $(X,B+\bM)$ consists of
\begin{itemize}
 \item a normal projective variety $X$,
 \item an effective $\Rr$-divisor $B$ on $X$ called the {\it boundary part}, and
 \item a b-nef b-$\Rr$-divisor $\bM$ on $X$ called the {\it moduli part},  such that $K_X+B+\bM_X$ is $\Rr$-Cartier.
\end{itemize}
\end{defn}

Let $ (X,B+\bM) $ be a g-pair. For a prime divisor $E$ over $X$, take a resolution $f:W \to X$ such that $E$ is a divisor on $W$,
then we may write 
$$ K_{W} + B_W + \bM_W \sim_\Rr f^* ( K_X + B + \bM_X ),$$
where $B_W$ is the unique $ \Rr $-divisor on $W$ such that $f_*B_W=B$. 
The \emph{log discrepancy} of $ E $ with respect to $ (X,B+\bM) $ is defined as 
$$ a(E, X, B+\bM) :=1-\mult_E B_W.$$
 \begin{defn} Fix a non-negative real number $\epsilon$.
A g-pair $ (X, B+\bM) $ is said to be {\it klt} (resp. {\it $\epsilon$-lc, lc}), if  $a(E, X, B+\bM)>0$ (resp. $\geq \epsilon$, $\geq 0$) for any prime divisor $E$ over $X$. 

An lc g-pair $ (X, B+\bM) $ is said to be \emph{dlt} if there is a closed subset $V\subset X$ such that
\begin{itemize}
    \item $X\backslash V$ is smooth and $\Supp B|_{X\backslash V}$ is simple normal crossing, and
    \item if $a(E, X, B+{\bf M})=0$ for some prime divisor $E$ over $X$, then $\Center_X(E)|_{X\backslash V}\neq\emptyset$ is an lc center of $(X\backslash V, B|_{X\backslash V})$.
\end{itemize}  \end{defn}
If ${\bf M}=0$, then all definitions coincide with those of a usual pair. In this paper, we only consider (g)-pairs with projective ambient varieties. We refer the reader to \cite{HaconLiu21} and references therein for recent progress on g-pairs.


\subsection{Contractions of Fano type}
\begin{defn}[{\cite{PS09}}]\label{defn RelFano}
Let $\pi: X\to Z$ be a contraction between normal varieties, $X$ is said to be \emph{of Fano type} over $Z$ if one of the following equivalent conditions holds:
\begin{enumerate}
 \item there exists a klt pair $(X,B)$ such that $-(K_X+B)$ is ample over $Z$;
 \item there exists a klt pair $(X,B')$ such that $K_X+B'\equiv_Z 0$ and $-K_X$ is big over $Z$;
  \item there exists a klt g-pair $(X,B''+\bM)$ such that $K_X+B''+\bM_X\equiv_Z 0$ and $-K_X$ is big over $Z$.
\end{enumerate}
Here for the equivalence of (3) and (1), we can use the proof of \cite[Lemma~3.24]{Bir18}.
\end{defn}

It is well-known that if $X\to Z$ is a contraction of Fano type, then we can run the MMP for any $\mathbb{R}$-Cartier divisor on $X$ over $Z$, and the property of being Fano type is preserved by MMP and contractions, see for example \cite[\S 2.10]{Bir19} for details.

 \subsection{Log Calabi--Yau fibrations}
 \begin{defn}
 A pair $(X, B)$ is called a {\it log Calabi--Yau pair} ({\it logCY pair} for short) if $K_X+B\sim_{\mathbb{R}} 0$.
 A g-pair $(X, B+{\bf M})$ is called a {\it log Calabi--Yau g-pair} ({\it logCY g-pair} for short) if $K_X+B+{\bf M}_X\sim_{\mathbb{R}} 0$. 
 \end{defn}

 \begin{defn}\label{defn cy fibration}Fix positive integers $d, r$ and a positive real number $\epsilon$. 
 A {\it weak $(d, r, \epsilon)$-logCY fibration} $(X, B+{\bf M})\to (Z, A)$ consists of a g-pair $(X, B+{\bf M})$ and a base-polarized contraction $\pi: X\to (Z, A)$ satisfying the following conditions:
 \begin{enumerate}
 \item $(X, B+{\bf M})$ is $\epsilon$-lc of dimension $d$,
 \item $K_X+B+{\bf M}_X\sim_{\mathbb{R}} \pi^*L$ for some $\mathbb{R}$-Cartier $\mathbb{R}$-divisor $L$ on $Z$,
 \item $A^{\dim Z}\leq r$, and
 \item $A-L$ is pseudo-effective.
 \end{enumerate}

 We call $(X, B+{\bf M})\to (Z, A)$ a {\it $(d, r, \epsilon)$-logCY fibration} if further $A-L$ is ample.
 
 We say that $(X, B+{\bf M})\to (Z, A)$ is {\it of Fano type} if $\pi:X\to Z$ is of Fano type, or equivalently, $-K_X$ is big over $Z$ (cf. \cite[Lemma~3.24]{Bir18}). 
 
 We say that $(X, B+{\bf M})\to (Z, A)$ is {\it rationally connected} if $\pi: X\to Z$ is rationally connected. 
 \end{defn}
 
 For example, in Theorem~\ref{thm main rel}, $(X, B+{\bf M})\to (Z, A)$ is a $(d, r, \frac{1}{l})$-logCY rationally connected fibration.
 In \cite[Definition~2.1]{Bir18}, a $(d, r, \epsilon)$-logCY fibration of Fano type is called a {\it generalized $(d, r, \epsilon)$-Fano type fibration}.

\begin{lem}\label{lem L+A ample}
Under the settings (1)-(2) of Definition~\ref{defn cy fibration}, $L+d'A$ is ample for all $d'>2d$. In particular, $Z$ is the log canonical model of $(X, B+d'\pi^*A+{\bf M}_X)$. 
\end{lem}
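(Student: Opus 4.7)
The plan is to reduce ampleness of $L + d'A$ on $Z$ to a cone-theoretic computation on $X$. Set $D := K_X + B + d'\pi^*A + \bM_X$, so that $D \sim_{\Rr} \pi^*(L + d'A)$. Since $\pi$ is a contraction with $\pi_*\OO_X = \OO_Z$, nefness (respectively, the property of being the pullback of an ample divisor) of $D$ on $X$ is equivalent to nefness (resp.\ ampleness) of $L + d'A$ on $Z$; thus it suffices to analyze $D$.

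The key step is to show that $D$ is nef for every $d' > 2d$. Suppose not. Then there exists a $D$-negative extremal ray $R$ of $\overline{NE}(X)$. Because $\pi^*A$ is nef, $R$ is also $(K_X + B + \bM_X)$-negative, so by the cone theorem and length-of-extremal-rays estimate for klt g-pairs (as collected in the references of \cite{HaconLiu21}) there is a rational curve $\ell$ generating $R$ with
\[
0 < -(K_X + B + \bM_X) \cdot \ell \le 2d.
\]
Such $\ell$ cannot be $\pi$-vertical, for otherwise $\pi^*L\cdot \ell = \pi^*A\cdot \ell = 0$ would force $(K_X+B+\bM_X)\cdot \ell = 0$, contradicting the strict negativity above. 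Hence $\pi_*\ell$ is a nonzero effective $1$-cycle on $Z$, and the integrality and very ampleness of $A$ give $\pi^*A \cdot \ell = A \cdot \pi_*\ell \ge 1$. Consequently
\[
D\cdot \ell \;=\; (K_X+B+\bM_X)\cdot \ell + d'\,\pi^*A\cdot \ell \;\ge\; -2d + d' \;>\; 0,
\]
a contradiction.

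Having shown $L + d'A$ is nef on $Z$ for every $d' > 2d$, ampleness follows formally: given $d' > 2d$, pick any $d'' \in (2d, d')$; then $L + d'A = (L + d''A) + (d' - d'')A$ is nef $+$ ample, hence ample. The "in particular" statement is now immediate from $K_X + B + d'\pi^*A + \bM_X \sim_\Rr \pi^*(L + d'A)$: with $L+d'A$ ample on $Z$ the contraction $\pi$ realizes $Z$ as the ample (equivalently, log canonical) model of this $\Rr$-divisor class on $(X, B + d'\pi^*A + \bM)$.

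The only delicate point in the argument is invoking the correct version of the length-of-extremal-rays bound $2d$ in the klt g-pair setting; once that is available, the remainder is a single intersection-number inequality.
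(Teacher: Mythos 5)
Your proposal is correct and takes essentially the same approach as the paper: both reduce ampleness to nefness of the pulled-back log canonical divisor and apply the length-of-extremal-rays bound $\le 2d$ for g-pairs (the paper cites \cite[Proposition~3.13]{HL18}), using $K_X+B+\bM_X\equiv_Z 0$ to see the offending curve is $\pi$-horizontal so that $\pi^*A\cdot C\ge 1$, yielding the same contradiction. The only cosmetic difference is that the paper proves nefness of $L+2dA$ and then adds $(d'-2d)A$, while you prove nefness of $L+d'A$ for each $d'>2d$ and then interpolate to get ampleness.
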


\begin{proof}
It suffices to show that $L+2dA$ is nef, or equivalently, $K_X+B+\bM_X+2d\pi^*A$ is nef.
This follows immedietely from the length of extremal rays for g-pairs (\cite[Proposition 3.13]{HL18}). Namely, if $K_X+B+\bM_X+2d\pi^*A$ is not nef, then $K_X+B+\bM_X$ is not nef and there exists a curve $C$ such that 
$(K_X+B+\bM_X+2d\pi^*A)\cdot C<0$ and $0>(K_X+B+\bM_X) \cdot C\geq -2d$, which implies that $\pi^*A\cdot C< 1$. 
As $K_X+B+\bM_X\equiv_Z 0$, $C$ is not contracted by $\pi$, so $\pi^*A\cdot C\geq 1$, a contradiction.
\end{proof}

 \subsection{Boundedness}\label{sec: boundedness}
 
 A collection $\mathcal{P}$ of projective varieties is
said to be \emph{bounded} (resp. \emph{bounded in codimension one})
if there exists a projective morphism 
$h\colon \mathcal{X}\rightarrow S$
between schemes of finite type such that
each $X\in \mathcal{P}$ is isomorphic (resp. isomorphic in codimension one) to $\mathcal{X}_s$ 
for some closed point $s\in S$. Here by taking a normalization of $\mathcal{X}$ and applying Noetherian induction, we may assume that each fiber $\mathcal{X}_{s}$ is normal. 

Moreover, if $\mathcal{P}$ is a set of logCY pairs $(X, B)$ (resp. logCY g-pairs $(X, B+{\bf M})$), then the set of $X$ in $\mathcal{P}$ is 
said to be {\it bounded modulo flops} if it is bounded in 
codimension one. 


A collection $\mathcal{P}$ of projective log pairs is said to be \emph{log bounded} if there is a projective morphism 
$h\colon \mathcal{X}\rightarrow S$
between schemes of finite type and a reduced divisor $\mathcal{E}$ on $\mathcal{X}$ where $\mathcal{E}$ does not contain any fiber of $h$, such that for every $(X,B)\in \mathcal{P}$, there is a closed point $s \in S$ and an isomorphism $f \colon \mathcal{X}_s \to X$ such that $\mathcal{E}_s:=\mathcal{E}|_{\mathcal{X}_s}$ coincides with the support of $f_*^{-1}B$.


 A collection $\mathcal{P}$ of base-polarized fibrations between projective varieties is
said to be \emph{bounded in codimension one} (resp. \emph{bounded})
if there exist projective morphisms
$\mathcal{X} \to \mathcal{Z} \to S$
between schemes of finite type and a Cartier divisor $\mathcal{A}$ on $\mathcal{Z}$ ample over $S$ such that
for every $(X\to (Z, A) )\in \mathcal{P}$, there exists a closed point $s\in S$ such that $\mathcal{Z}_s\simeq Z$, $\mathcal{A}_s\simeq A$, and there is a small birational map (resp. isomorphism) $\mathcal{X}_s\dashrightarrow X$, where all relations are assumed to commute.

We provide two easy but useful lemmas on boundedness of base-polarized fibrations.

\begin{lem}\label{lem log bdd implies bdd fib}
Let $\mathcal{P}$ be a set of base-polarized fibrations. Suppose that there exists a positive integer $r$ such that for each $\pi: X\to (Z, A)$ in $\mathcal{P}$, there exists a very ample divisor $H$ on $X$ such that $H^{\dim X}\leq r$ and $H-\pi^*A$ is pseudo-effective.
Then $\mathcal{P}$ is bounded.
\end{lem}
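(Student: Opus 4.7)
The plan is to bound $X$ together with the effective pullback $\pi^*A$ via the very ample divisor $H$, and then to recover the polarized fibration $(X\to Z,A)$ intrinsically from the linear system $|\pi^*A|$.

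First, intersecting the pseudo-effective class $H-\pi^*A$ with the nef class $H^{\dim X-1}$ yields
\[
\pi^*A\cdot H^{\dim X-1}\le H^{\dim X}\le r.
\]
Since $H$ is very ample with bounded self-intersection and $\dim X$ is bounded (any bounded family forces bounded dimension, and in the intended application to Theorem~\ref{thm main rel} one has $\dim X=d$), the embedding $X\hookrightarrow \bP\bigl(H^0(X,H)\bigr)$ realizes $X$ as a subvariety of bounded degree in a projective space of bounded dimension; thus $(X,H)$ lies in a bounded Hilbert scheme. Taking $A\ge 0$ effective, the pullback $\pi^*A\ge 0$ is an effective Cartier divisor on $X$ of bounded degree with respect to this embedding, and so the triple $(X,H,\pi^*A)$ varies in a bounded family.

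Next, I would show that the polarized fibration is canonically determined by the pair $(X,\pi^*A)$. Since $A$ is base-point-free on $Z$ and $\pi_*\OO_X=\OO_Z$, the projection formula yields $H^0(X,\pi^*A)=H^0(Z,A)$, and $\pi^*A$ is itself base-point-free, so the morphism $\varphi:X\to\bP\bigl(H^0(X,\pi^*A)\bigr)$ defined by $|\pi^*A|$ factors as
\[
X\xrightarrow{\pi} Z\hookrightarrow \bP\bigl(H^0(Z,A)\bigr),
\]
where the second arrow is the closed embedding by the very ample $A$. Hence $Z$ is the scheme-theoretic image of $\varphi$, the contraction $\pi$ is the induced factor, and $A$ is the restriction of the hyperplane class on $\bP(H^0(Z,A))$.

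Finally, Noetherian induction assembles the bounded family. Stratifying the parameter space of $(X,\pi^*A)$ from the first step into finitely many locally closed strata $S$ on which $h_*\OO_\mathcal{X}(\mathcal{D})$ is locally free of constant rank (where $h:\mathcal{X}\to S$ is the universal family and $\mathcal{D}$ is the universal divisor), the construction of $\varphi$ and of the scheme-theoretic image above globalizes compatibly with base change, producing projective morphisms $\mathcal{X}\to \mathcal{Z}\to S$ of finite type together with an ample Cartier divisor $\mathcal{A}$ on $\mathcal{Z}$ pulled back from the universal hyperplane class, which witnesses boundedness of $\mathcal{P}$. The only step that requires care is verifying that $|\pi^*A|$ and the scheme-theoretic image commute with base change on each stratum, which is routine once $\chi(X,\pi^*A)$ is constant there via cohomology-and-base-change.
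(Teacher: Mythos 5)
Your proposal is correct and takes essentially the same route as the paper: bound the pair $(X,\pi^*A)$ in a family by means of the very ample divisor $H$ of bounded degree (Hilbert schemes), then reconstruct the fibration $\pi$ and the polarization $A$ from the relative linear system of the universal divisor, using $\pi_*\mathcal{O}_X=\mathcal{O}_Z$, cohomology-and-base-change, and Noetherian stratification. Your only additions are the explicit intersection bound $\pi^*A\cdot H^{\dim X-1}\le H^{\dim X}\le r$ and the remark that one implicitly needs $\dim X$ bounded (as in the application with $\dim X=d$), both of which the paper leaves tacit.
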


\begin{proof}
By the boundedness of Hilbert schemes, we may find 
a projective morphism $h: \mathcal{X} \to S$
between schemes of finite type and a Cartier divisor $\mathcal{L}$ on $\mathcal{X}$ such that for every $(\pi: X\to (Z, A))\in \mathcal{P}$, there exists a point $s\in S$ such that $\mathcal{X}_s\simeq X$ and $\mathcal{L}_s\simeq \pi^*A$. By Noetherian induction and \cite[Corollary 12.9]{GTM52}, after decomposing $S$, we may assume that the stalk of $h_{*}\mathcal{L}$ at $s$ is isomorphic to $H^0(\mathcal{X}_s, \mathcal{L}_s)$ for any $s\in S$. Moreover, since $\pi^*A$ is free, by possibly shrinking $S$, we may assume that $h^*h_*\mathcal{L}\to \mathcal{L}$ is surjective. So $\mathcal{L}$ induces a fibration $\Pi: \mathcal{X}\to \mathcal{Z}$ over $S$ with a Cartier divisor $\mathcal{A}$ on $\mathcal{Z}$ ample over $S$ such that $\Pi^*\mathcal{A}=\mathcal{L}$. Therefore, for $s\in S$ corresponding to $(\pi: X\to (Z, A))\in \mathcal{P}$, $\mathcal{X}_s\to (\mathcal{Z}_s, \mathcal{A}_s)$ is isormorphic to $X\to (Z, A)$, which shows that $\mathcal{P}$ is bounded.
\end{proof}

\begin{lem}\label{lem bounded chain}
Let $\mathcal{P}$ and $\mathcal{P}'$ be two sets of base-polarized fibrations. Denote $\mathcal{P}''$ to be the set of base-polarized fibrations $X\to (Z, H_Z)$ such that $X\to Z$ is induced by $X\to Y\to Z$ for some $(f: X\to (Y, H_Y))\in \mathcal{P}$ and $(g: Y\to (Z, H_Z))\in \mathcal{P'}$ with $H_Y-g^*H_Z$ pseudo-effective. 
If $\mathcal{P}'$ is bounded and $\mathcal{P}$ is bounded in codimension one (resp. bounded),
then $\mathcal{P}''$ is bounded in codimension one (resp. bounded).
\end{lem}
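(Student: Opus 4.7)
The plan is to glue the two bounded families together using a relative isomorphism scheme. Let $\mathcal{X}\to\mathcal{Y}_1\to S_1$, with ample (over $S_1$) Cartier divisor $\mathcal{H}_1$ on $\mathcal{Y}_1$, be the data witnessing that $\mathcal{P}$ is bounded in codimension one (resp.\ bounded); and let $\mathcal{Y}_2\to\mathcal{Z}\to S_2$, with ample (over $S_2$) Cartier divisor $\mathcal{A}$ on $\mathcal{Z}$, be the data witnessing that $\mathcal{P}'$ is bounded. After stratifying $S_1$ and $S_2$ by Noetherian induction, I may assume both $\mathcal{Y}_1\to S_1$ and $\mathcal{Y}_2\to S_2$ are flat projective morphisms with geometrically integral fibers.

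Base-changing both families to $T:=S_1\times S_2$, I form the relative isomorphism scheme $W:=\operatorname{Isom}_T((\mathcal{Y}_1)_T,(\mathcal{Y}_2)_T)$, which is of finite type over $T$ by Grothendieck's standard construction, using flatness and projectivity of the $\mathcal{Y}_i\to S_i$. Over $W$ there is a tautological isomorphism $\psi\colon(\mathcal{Y}_1)_W\xrightarrow{\;\sim\;}(\mathcal{Y}_2)_W$, and using $\psi$ I glue the pulled-back families to form
\[
\mathcal{X}_W\longrightarrow(\mathcal{Y}_1)_W\xrightarrow{\;\psi\;}(\mathcal{Y}_2)_W\longrightarrow\mathcal{Z}_W
\]
over $W$, together with the Cartier divisor $\mathcal{A}_W$ on $\mathcal{Z}_W$, which remains ample over $W$. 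This is my candidate combined family for $\mathcal{P}''$.

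It then remains to check that every $(X\to(Z,H_Z))\in\mathcal{P}''$ arises from a closed point of $W$. By the definition of $\mathcal{P}''$ such an element factors as $f\colon X\to(Y,H_Y)\in\mathcal{P}$ and $g\colon Y\to(Z,H_Z)\in\mathcal{P}'$. Boundedness of $\mathcal{P}$ supplies an $s_1\in S_1$ with $(\mathcal{Y}_1)_{s_1}\simeq Y$ and a small birational map (resp.\ isomorphism) $\mathcal{X}_{s_1}\dashrightarrow X$ commuting with the morphisms to $Y$, while boundedness of $\mathcal{P}'$ supplies an $s_2\in S_2$ with $(\mathcal{Y}_2)_{s_2}\simeq Y$ as a $\mathcal{Z}_{s_2}$-scheme, $\mathcal{Z}_{s_2}\simeq Z$, and $\mathcal{A}_{s_2}\simeq H_Z$. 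Composing the two identifications of $Y$ defines a closed point $w\in W$ over $(s_1,s_2)$ whose fiber in the combined family recovers $X\to(Z,H_Z)$ up to the stated birational equivalence.

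The main obstacle is simply justifying the finite-type property of the Isom scheme $W$; once the Noetherian stratification reduces us to flat projective families, this is classical, and the remainder of the argument is essentially book-keeping. Note that the hypothesis that $H_Y-g^*H_Z$ is pseudo-effective plays no role in the boundedness argument itself; it is only needed in applications of this lemma elsewhere in the paper.
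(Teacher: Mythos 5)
There is a genuine gap at the central step: the claim that $W=\operatorname{Isom}_T((\mathcal{Y}_1)_T,(\mathcal{Y}_2)_T)$ is of finite type over $T$ is not justified and is false in general. Grothendieck's construction only represents $\operatorname{Isom}$ by a scheme \emph{locally} of finite type: it is a countable union of quasi-projective pieces indexed by the Hilbert polynomial of the graph of the isomorphism, computed with respect to chosen relative polarizations on $\mathcal{Y}_1$ and $\mathcal{Y}_2$. Your two families polarize the middle object $Y$ in completely unrelated ways, so there is no a priori bound on these graph degrees. Concretely, on a single abelian surface $Y=E\times E$ one has very ample divisors $H_0=3(f_1+f_2)$ and $H_n=3(f_2+\Gamma_n)$ (with $\Gamma_n$ the graph of multiplication by $-n$) whose self-intersections are all $18$, while $H_0\cdot H_n=9(n^2+2)\to\infty$; thus the identity map of $Y$, viewed with respect to the two polarizations, has unbounded graph degree, and the points of $\operatorname{Isom}$ you need can sit on infinitely many components. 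Passing to a finite-type piece of $W$ therefore requires a uniform numerical comparison between the polarization $H_Y$ coming from $\mathcal{P}$ and the data pulled back through $\mathcal{P}'$, which your argument never produces (indeed you never use any degree bounds at all).

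This is also exactly where your closing remark goes wrong: the hypothesis that $H_Y-g^*H_Z$ is pseudo-effective is not decorative, it is the source of the missing uniformity. The paper's proof runs differently and avoids $\operatorname{Isom}$ entirely: after replacing $X$ by a birational model one may assume $\mathcal{P}$ is bounded, so there is a very ample $H_X$ on $X$ with $H_X^{\dim X}\le r$ and $H_X-f^*H_Y$ ample; combined with the pseudo-effectivity of $H_Y-g^*H_Z$ this gives $H_X-f^*g^*H_Z$ pseudo-effective, which bounds the composite polarization against $H_X$, and then Lemma~\ref{lem log bdd implies bdd fib} (Hilbert-scheme boundedness of $(X,f^*g^*H_Z)$ plus recovering the fibration to $Z$ from the free line bundle $f^*g^*H_Z$ via cohomology and base change) yields boundedness of $\mathcal{P}''$. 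To salvage your gluing approach you would have to prove a bound on the graphs of the relevant isomorphisms, and any such bound would have to be extracted from precisely the hypothesis you set aside; as written, the proposal does not establish the lemma.
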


\begin{proof}
After replacing $X$ by a birational model, it suffices to prove the case that $\mathcal{P}$ is bounded.
In this case,
there exists a positive integer $r$ such that for every $(f: X\to (Y, H_Y))\in \mathcal{P}$, there exists a very ample divisor $H_X$ on $X$ such that $H_X^{\dim X}\leq r$ and $H_X-f^*H_Y$ is ample. In particular, $H_X-f^*g^*H_Z$ is pseudo-effective. So
$\mathcal{P}''$ is bounded by Lemma~\ref{lem log bdd implies bdd fib}.
\end{proof}

 \section{Proofs of main theorems}
 
\subsection{Boundedness results}
 
The following theorem is a reformulation of {\cite[Theorem~2.2]{Bir18}}. 
 \begin{thm}[{\cite[Theorem~2.2]{Bir18}}]\label{thm: bir18 special}
 Fix positive integers $d, r$ and a positive real number $\epsilon$. Then the set of base-polarized fibrations $X\to (Z, A)$ such that there exists a $(d, r, \epsilon)$-logCY fibration of Fano type $(X, B+{\bf M})\to (Z, A)$ as in Definition~\ref{defn cy fibration},
 is bounded. 
 \end{thm}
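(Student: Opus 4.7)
The plan is to deduce the statement directly from \cite[Theorem~2.2]{Bir18} by matching the two sets of definitions; no new geometric input is required. As already observed in the paragraph following Definition~\ref{defn cy fibration}, a $(d,r,\epsilon)$-logCY fibration of Fano type $(X,B+\bM)\to(Z,A)$ is exactly what Birkar calls a generalized $(d,r,\epsilon)$-Fano type fibration in \cite[Definition~2.1]{Bir18}. Both objects consist of an $\epsilon$-lc g-pair $(X,B+\bM)$ of dimension $d$ equipped with a contraction $\pi\colon X\to Z$, a very ample divisor $A\ge 0$ on $Z$ with $A^{\dim Z}\le r$, the relation $K_X+B+\bM_X\sim_{\Rr}\pi^*L$ for some $\Rr$-Cartier $L$ on $Z$, the ampleness of $A-L$, and the Fano-type hypothesis, which by \cite[Lemma~3.24]{Bir18} is equivalent to $-K_X$ being big over $Z$.

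Next I would verify that the conclusion of Birkar's Theorem~2.2 matches our notion of boundedness of base-polarized fibrations given in \S\ref{sec: boundedness}. Birkar's theorem produces a projective morphism $\mathcal{X}\to\mathcal{Z}\to S$ of schemes of finite type together with a Cartier divisor on $\mathcal{Z}$ that is ample over $S$, such that every admissible fibration $X\to(Z,A)$ is isomorphic (compatibly with the polarization) to the fiber $\mathcal{X}_s\to(\mathcal{Z}_s,\mathcal{A}_s)$ for some closed point $s\in S$. This is literally the definition of boundedness of base-polarized fibrations, so the translation is immediate.

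The only subtle points are bookkeeping: one must confirm that Birkar's ``generalized Fano type fibration'' in \cite[Definition~2.1]{Bir18} indeed couples the variety $X$ with the polarized base $(Z,A)$ as part of the bounded data (rather than recovering $A$ only up to a bounded multiple), and that his conclusion yields an honest isomorphism over the parameter scheme (so boundedness, not merely boundedness in codimension one, is obtained). Since both points are explicit in Birkar's formulation, the statement follows with no further argument. I therefore do not expect any genuine obstacle: the work lies entirely in \cite{Bir18}, and the present theorem is a repackaging tailored to fit the g-pair language of \S\ref{sec: gpair} and the relative set-up of Theorem~\ref{thm main rel}.
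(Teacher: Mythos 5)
Your reduction is not correct as stated, because it misquotes the conclusion of \cite[Theorem~2.2]{Bir18}. While you are right that a $(d,r,\epsilon)$-logCY fibration of Fano type is the same object as Birkar's generalized $(d,r,\epsilon)$-Fano type fibration (the paper says exactly this after Definition~\ref{defn cy fibration}), Birkar's Theorem~2.2 does \emph{not} output a bounded family of base-polarized fibrations $\mathcal{X}\to\mathcal{Z}\to S$ with a relatively ample Cartier divisor on $\mathcal{Z}$. Its conclusion is (log) boundedness of $(X,\Supp\Delta)$ for an auxiliary divisor $0\le\Delta\le B$ with coefficients bounded below; the base $Z$, the polarization $A$, and the morphism $\pi$ are not part of the bounded data in the format required by the definition of boundedness of base-polarized fibrations in \S\ref{sec: boundedness}. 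So the ``translation is immediate'' step is precisely where the content of the theorem lies, and your proposal skips it.

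The paper's proof supplies the two missing ingredients. First, to feed Birkar's Theorem~2.2 a useful $\Delta$ one needs $\pi^*A$ in the boundary: by \cite[Theorem~2.3]{Bir18} there is $t=t(d,r,\epsilon)>0$ with $(X,B+t\pi^*A+\bM)$ klt, and then $(X,B+\tfrac{t}{2}\pi^*A+\bM)\to(Z,2A)$ is a $(d,2^dr,\tfrac{\epsilon}{2})$-logCY fibration of Fano type (the rescaling $A\mapsto 2A$ keeps $2A-\tfrac{t}{2}A-L$ ample); applying \cite[Theorem~2.2]{Bir18} with $\Delta=\tfrac{t}{2}\pi^*A$ gives that $(X,\tfrac{t}{2}\pi^*A)$ is log bounded, whence a very ample $H$ on $X$ with $H^d\le r'$ and $H-\pi^*A$ ample (here one also uses that the coefficients of $\pi^*A$ are bounded by $2/t$ since they appear in an $\tfrac{\epsilon}{2}$-lc boundary). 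Second, one must reconstruct the polarized base inside the family: this is Lemma~\ref{lem log bdd implies bdd fib}, which uses Hilbert schemes, cohomology and base change, and Noetherian induction to produce $\mathcal{X}\to\mathcal{Z}\to S$ and $\mathcal{A}$ from the line bundle $\pi^*A$ on the bounded family of $X$'s. Without these two steps your argument does not establish boundedness of $X\to(Z,A)$, only boundedness of $X$ at best.
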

 
 \begin{proof}
 
By applying {\cite[Theorem~2.3]{Bir18}} to $P=\pi^*A$, there exists a positive real number $t$ depending only on $d, r$, and $\epsilon$ such that 
$(X, B+t\pi^{*}A+\bM)$ is klt where $B+t\pi^{*}A$ is the boundary part. By assumption, $A-L$ is ample. Here we may assume that $t<1$, so $2A-\frac{t}{2}A-L$ is ample.
Now $(X, B+\frac{t}{2}\pi^{*}A+\bM)\to (Z, 2A)$ is a $(d, 2^d r, \frac{\epsilon}{2})$-logCY fibration of Fano type. By applying {\cite[Theorem~2.2]{Bir18}} to $\Delta=\frac{t}{2}\pi^{*}A$, $(X, \frac{t}{2}\pi^{*}A)$ belongs to a log bounded family. Since coefficients of $t\pi^{*}A$ are less than $1$,  we have $\frac{t}{2}\pi^{*}A\leq \Supp \pi^{*}A$, and hence there exists a positive integer $r'$ depending only on $d, r$ and $\epsilon$, and a very ample divisor $H$ on $X$ such that $H^{d}\leq r'$ and $H-\pi^*A$ is ample.
Hence the conclusion follows from Lemma~\ref{lem log bdd implies bdd fib}.
 \end{proof}
The following lemma is useful in order to show that if a set of base-polarized fibrations is bounded in codimension one, then the set of certain birational models of them remains bounded in codimension one. 
 
 \begin{lem}\label{bdd contraction}
 Fix a positive rational number $\epsilon$ and positive integers $d, r, l$.
 Let $\mathcal{P}$ be a set of weak $(d, r, \epsilon)$-logCY fibrations $(Y, C)\to (Z, A)$ with $lC$ integral and $\bM=0$.
 Suppose that $\mathcal{P}$ is bounded in codimension one. Denote $\mathcal{P}'$ to be 
 the set consisting of all weak $(d, r, \epsilon)$-logCY fibrations $(X, B)\to (Z, A)$ such that
 \begin{itemize}
 \item there exists a birational morphism $h: Y\to X$ with $h^*(K_X+B)=K_Y+C$,
 \item all $h$-exceptional divisors are in $\Supp C$, and
 \item $((Y, C)\to (Z, A))\in \mathcal{P}$ where the fibration $Y\to Z$ is induced by $Y\to X\to Z$.
 \end{itemize}
 Then the set of  base-polarized fibrations $X\to (Z, A)$ in $\mathcal{P}'$ is bounded in codimension one.
 \end{lem}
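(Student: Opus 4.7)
The plan is to construct a family $\mathcal{X}\to\mathcal{Z}\to S$ witnessing the boundedness of $\mathcal{P}'$ in codimension one, starting from the family that witnesses bounded-in-codimension-one for $\mathcal{P}$ and then running a relative MMP in families to perform the contraction $h$ uniformly.

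First I would unpack the hypothesis on $\mathcal{P}$: there exist projective morphisms $\mathcal{Y}\to\mathcal{Z}\to S$ between schemes of finite type and a Cartier divisor $\mathcal{A}$ on $\mathcal{Z}$ ample over $S$, such that each $((Y,C)\to(Z,A))\in\mathcal{P}$ corresponds to some $s\in S$ with $\mathcal{Z}_s\simeq Z$, $\mathcal{A}_s\simeq A$, and a small birational map $\psi_s\colon \mathcal{Y}_s\dashrightarrow Y$ commuting with the fibrations to $Z$. Because $lC$ is integral and $(Y,C)$ is $\epsilon$-lc, the coefficients of $C$ lie in the finite set $\frac{1}{l}\mathbb{Z}\cap[0,1-\epsilon]$, and smallness of $\psi_s$ implies the strict transform of $C$ to $\mathcal{Y}_s$ has identical coefficients. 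By Noetherian induction on $S$, I may assume there is a single $\mathbb{Q}$-divisor $\mathcal{C}$ on $\mathcal{Y}$ restricting to these strict transforms on each relevant fiber.

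Next I would track the $h$-exceptional divisors in families. For each $((X,B)\to(Z,A))\in\mathcal{P}'$ with associated $h\colon Y\to X$, the hypothesis forces the exceptional divisors of $h$ to be a subset of the components of $\Supp C$, hence (via $\psi_s$) a subset of the components of $\Supp\mathcal{C}_s$. Since $\Supp\mathcal{C}$ has only finitely many components over any given connected component of $S$, there are only finitely many combinatorial choices for this subset; by further Noetherian induction, I may assume a fixed reduced divisor $\mathcal{E}\subseteq\Supp\mathcal{C}$ on $\mathcal{Y}$ tracks the exceptional divisors of $h$.

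The heart of the argument is then to contract $\mathcal{E}$ in the family. Fix a small rational $0<\delta<\epsilon$. Then $(\mathcal{Y},\mathcal{C}+\delta\mathcal{E})$ is fiberwise klt since coefficients are bounded by $1-\epsilon+\delta<1$, and since $K_Y+C\equiv_Z 0$ we have $K_{\mathcal{Y}/\mathcal{Z}}+\mathcal{C}+\delta\mathcal{E}\equiv_{\mathcal{Z}}\delta\mathcal{E}$ on each fiber. On the fiber, the relative log canonical model of $(Y,C+\delta E)$ over $Z$ is precisely $(X,B)$: indeed $h^*(K_X+B+\delta\cdot 0)+\delta E=K_Y+C+\delta E$ with $E$ being $h$-exceptional, so $(X,B)$ is klt, $K_X+B$ is $Z$-trivial, and the push-forward $\delta\cdot h_*E=0$. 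Invoking the existence of relative log canonical models for klt pairs together with invariance-of-plurigenera type statements (in the spirit of Hacon--Xu), and decomposing $S$ further if needed to ensure uniform behavior along strata, one produces a relative log canonical model $\mathcal{X}\to\mathcal{Z}$ over $S$ whose fiber $\mathcal{X}_s$ is small birational to the $X$ attached to the corresponding parameter. Together with the existing $\mathcal{A}$ on $\mathcal{Z}$, this exhibits $\mathcal{P}'$ as bounded in codimension one.

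The main obstacle is carrying out the relative log canonical model in families uniformly: one must rule out jumps in the MMP behavior along $S$ so that a single scheme $\mathcal{X}\to\mathcal{Z}$ simultaneously specializes to a small birational model of every $X\in\mathcal{P}'$. Stratifying $S$ according to the combinatorial type of $\Supp\mathcal{C}$ and $\mathcal{E}$, and applying Noetherian induction combined with the generic behavior of the MMP for klt pairs, handles this, but the bookkeeping must be done carefully.
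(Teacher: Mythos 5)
Your overall strategy (contract the exceptional divisors by an MMP run in the family coming from the boundedness of $\mathcal{P}$, then match fibers with $X$ up to flops) is the same as the paper's, but two steps as written would fail. First, the identification ``the relative log canonical model of $(Y,C+\delta E)$ over $Z$ is precisely $(X,B)$'' is false: since $K_Y+C\sim_{\mathbb R}$ the pullback of $L$ from $Z$ and $E$ is effective and $h$-exceptional, one has $h_*\mathcal O_Y(\lfloor m\delta E\rfloor)=\mathcal O_X$, so the relative log canonical algebra over $Z$ is trivial and the lc model over $Z$ is $Z$ itself. The pair $(X,B)$ is only a \emph{good minimal model} of $(Y,C+\delta E)$ over $Z$, hence determined only up to isomorphism in codimension one; this is exactly why the paper does not work relatively over $\mathcal{Z}$ but instead adds $d_0\pi^*\mathcal{A}$ with $d_0>2\dim\mathcal{Y}$ (Lemma~\ref{lem L+A ample}) so that $Z$ becomes the log canonical model of the perturbed pair, runs an \emph{absolute} MMP over $S$ via \cite[Theorem~1.2]{HMX18}, uses the length of extremal rays to see the MMP is $g^*\mu^*\mathcal{A}$-trivial (producing the morphism $\mathcal{Z}''\to\mathcal{Z}$ and the polarization $\mathcal{A}''$), and finally compares $\mathcal{Y}''_s$ with $X$ by uniqueness of good minimal models in codimension one (\cite[Theorem~3.5.2]{KM98}). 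Your version gives no mechanism for recovering the base-polarized structure $X\to(Z,A)$ in the family.

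Second, the klt claim for $(\mathcal{Y},\mathcal{C}+\delta\mathcal{E})$ is unjustified: coefficients bounded by $1-\epsilon+\delta<1$ do not imply klt on a non-log-smooth total space or fiber, and log discrepancies of $(Y,C+\delta E)$ at divisors exceptional over $Y$ drop by $\delta$ times the multiplicity of the pullback of $E$, which is unbounded across the (infinite) family, so no uniform $\delta$ is available from your argument. Relatedly, the HMX-type results you invoke require a log smooth family over the base, which $\mathcal{Y}\to S$ need not be. The paper resolves both points simultaneously by passing to a log resolution $\mathcal{Y}'\to\mathcal{Y}$ of $(\mathcal{Y},\mathcal{B}+\mathcal{D})$, stratifying $S$ so the fibers are log resolutions, and putting coefficient $1-\tfrac{\epsilon}{2}$ on \emph{all} divisors exceptional over $X$ (including the new exceptional divisors of the resolution, which your setup never sees); in the log smooth setting coefficients $<1$ do give klt, and one checks $\mathcal{B}'_s+(1-\tfrac{\epsilon}{2})\mathcal{E}'_s-C'_s\geq 0$ with support exactly $\mathcal{E}'_s$, so the minimal model contracts precisely the divisors exceptional over $X$. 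Your preliminary reductions (finite coefficient set $\tfrac1l\mathbb Z\cap[0,1-\epsilon]$, finitely many choices of the exceptional subset, stratification and base change) are in the spirit of the paper's bound $2^{2lv}$ on the possible $B_Y$, but the core MMP-in-families step needs the log resolution, the coefficient-raising, and the ample perturbation to go through.
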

 \begin{proof}
As $\mathcal{P}$ is bounded in codimension one, there are quasi-projective schemes $\mathcal{Y}, \mathcal{Z}$, a Cartier divisor $\mathcal{A}$ on $\mathcal{Z}$ ample over $S$, 
and projective morphisms $\mathcal{Y}\overset{\mu}{\rightarrow} \mathcal{Z}\to S$, where $S$ is a disjoint union of finitely many varieties 
such that
for every $((Y, C) \to (Z, A))\in \mathcal{P}$, there is a closed point $s \in S$ and a small birational
map $f_s : {\mathcal{Y}_{s}} \dashrightarrow Y$ such that ${\mathcal{Y}_{s}}$ is normal, $\mathcal{Z}_s\simeq Z$, and $\mathcal{A}_s\simeq A$.
We may assume that the set of points $s$ corresponding to $((Y, C)\to (Z, A))\in \mathcal{P}$ is dense in $S$. After decomposing $S$, we may also assume that $S$ is smooth and affine.

For a point $s$ corresponding to $((Y,C)\to (Z, A))\in \mathcal{P}$, as $f_s$ is small and $\mathcal{Y}_s$ is normal, 
 $\mu_s: ({\mathcal{Y}_s}, f_{s*}^{-1}C)\to (\mathcal{Z}_s,\mathcal{A}_s)$ is a weak $(d, r, \epsilon)$-logCY fibration. So $K_{\mathcal{Y}_s}+f_{s*}^{-1}C\sim_{\mathbb{Q}} \mu_s^*L_s$ for a $\mathbb{Q}$-Cartier $\mathbb{Q}$-divisor $L_s$ on $\mathcal{Z}_s$ and 
 $\mathcal{A}_s-L_s$ is pseudo-effective. 
By the existence of $\mathcal{Y}$, there exists a very ample divisor $H$ on $\mathcal{Y}_s$ and a positive integer $v$ depending only on $\mathcal{P}$ such that $H^d\leq v$, $-K_{\mathcal{Y}_s}\cdot H^{d-1}\leq v$, and $\mu_s^*L_s\cdot H^{d-1}\leq \mu_s^*\mathcal{A}_s\cdot H^{d-1}\leq v$.
As $lf_{s*}^{-1}C$ is integral,
$$
\Supp(f_{s*}^{-1}C) \cdot H^{d-1}\leq lf_{s*}^{-1}C\cdot H^{d-1}=l(-K_{\mathcal{Y}_s}+\mu_s^*L_s)\cdot H^{d-1} \leq 2lv.
$$
In particular, $({\mathcal{Y}_s}, f_{s*}^{-1}C)$ belongs to a log bounded family and the number of components of $\Supp(f_{s*}^{-1}C) $ is at most $2lv$.
Therefore, for a fixed $((Y, C)\to (Z, A))\in \mathcal{P}$, the number of $\mathbb{Q}$-divisors $B_Y$ such that $B_Y=h_*^{-1}B$ for some
$((X, B)\to (Z, A))\in \mathcal{P}'$ corresponding to $(Y, C)\to (Z, A)$ as in the assumption is at most $2^{2lv}$.

So possibly after taking a finite base change of $S$, we may assume that there exist $\mathbb{Q}$-divisors $\mathcal{B}$ and 
$\mathcal{D}$ on $\mathcal{Y}$ which do not contain any fiber of $\mathcal{Y}\to S$ such that for each $((X, B)\to (Z, A))\in \mathcal{P}'$ and $((Y, C)\to (Z, A))\in \mathcal{P}$ as in the assumption, there exists a closed point $s\in S$ and a small birational map $f_s: \mathcal{Y}_s\dashrightarrow Y$ such that
$\mathcal{B}_{s}=f_{s*}^{-1}(h_*^{-1} B)$ and $\mathcal{D}_{s}=f_{s*}^{-1}(C-h_*^{-1} B)$.
Here by assumption, $\Supp \mathcal{D}_{s}$ consists of exactly all divisors on $\mathcal{Y}_s$ which are exceptional over $X$.

Now consider a log resolution $g: \mathcal{Y}'\to \mathcal{Y}$ of $(\mathcal{Y},\mathcal{B}+\mathcal{D})$. Denote by $\mathcal{B}'$ the strict transform of $\mathcal{B}$ and denote by $\mathcal{E}'$ the sum of the strict transform of $\Supp \mathcal{D}$ and all $g$-exceptional prime divisors on $\mathcal{Y}'$. After decomposing $S$ into finitely many locally closed subsets, we may assume that for every $s\in S$, $\mathcal{Y}'_s$ is a log resolution of $(\mathcal{Y}_s,\mathcal{B}_s+\mathcal{D}_s)$ and $\mathcal{E}'_s$ is the sum of the strict transform of $\Supp \mathcal{D}_s$ and all exceptional prime divisors on $\mathcal{Y}'_s$.

Fix an integer $d_0>2\dim\mathcal{Y}\geq 2d$.
For a point $s\in S$ corresponding to $((X, B)\to (Z, A))\in \mathcal{P}'$ and $((Y, C)\to (Z, A))\in \mathcal{P}$ as in the assumption, 
we may write
$$
K_{\mathcal{Y}'_s}+C'_s+d_0g_s^*\mu_s^*\mathcal{A}_s:=g_s^*(K_{{\mathcal{Y}_s}}+f_{s*}^{-1}C+d_0\mu_s^*\mathcal{A}_s)\sim_{\mathbb{Q}} g_s^*\mu_s^*(L_s+d_0\mathcal{A}_s)
$$
where the coefficients of $C'_s$ are $\leq 1-\epsilon$ and its support is contained in $\Supp(\mathcal{B}'_s+\mathcal{E}'_s)$. Then
$$
(K_{\mathcal{Y}'}+\mathcal{B}'+(1-\frac{\epsilon}{2})\mathcal{E}'+d_0g^*\mu^*\mathcal{A})|_{{\mathcal{Y}'_s}}
\sim_{\mathbb{Q}} {}g_s^*\mu_s^*(L_s+d_0\mathcal{A}_s)+\mathcal{B}'_s+(1-\frac{\epsilon}{2})\mathcal{E}'_s-C'_s.
$$
Note that $\mathcal{B}'_s+(1-\frac{\epsilon}{2})\mathcal{E}'_s-C'_s\geq 0$ and its support coincides with $\mathcal{E}'_s$ which are precisely the divisors on $\mathcal{Y}'_s$ exceptional over $X$. By Lemma~\ref{lem L+A ample}, $Z_s$ is the log canonical model of 
$(X, B+d_0\pi^*\mathcal{A}_s)$ with $$K_X+B+d_0\pi^*\mathcal{A}_s=\pi^*(L_s+d_0\mathcal{A}_s),$$ where $\pi:X\to \mathcal{Z}_s$ is the natural fibration. So 
$(X, B+d_0\pi^*\mathcal{A}_s)$
is a good minimal model of $({\mathcal{Y}'_s}, \mathcal{B}'_s+ (1-\frac{\epsilon}{2})\mathcal{E}'_s+d_0g_s^*\mu_s^*\mathcal{A}_s)$.
By replacing $\mathcal{A}$ by a general member $\mathcal{A}'$ in its $\mathbb{Q}$-linear sysytem, we may assume that $({\mathcal{Y}'}, \mathcal{B}'+ (1-\frac{\epsilon}{2})\mathcal{E}'+d_0g^*\mu^*\mathcal{A}')$ is log smooth and klt over $S$. 
By \cite[Theorem~1.2]{HMX18}, $({\mathcal{Y}'}, \mathcal{B}'+ (1-\frac{\epsilon}{2})\mathcal{E}'+d_0g^*\mu^*\mathcal{A}')$ has a good minimal model ${\mathcal{Y}}''$ with the log canonical model ${\mathcal{Y}}''\to {\mathcal{Z}''}$ over $S$. In particular, 
${\mathcal{Y}}''\to {\mathcal{Z}''}$
is also a good minimal model with the log canonical model for 
$({\mathcal{Y}'}, \mathcal{B}'+ (1-\frac{\epsilon}{2})\mathcal{E}'+d_0g^*\mu^*\mathcal{A})$ over $S$. We may assume that this good minimal model is obtained by an MMP by \cite[Corollary 2.9]{HX13}.
By the choice of $d_0$ and the the length of extremal rays \cite[Theorem 1]{Kaw length}, this MMP is $g^*\mu^*\mathcal{A}$-trivial. So there is a natural morphism ${\mathcal{Z}''}\to \mathcal{Z}$.
By Noetherian induction, after decomposing $S$ into finitely many locally closed subsets, we may assume that for every $s\in S$, ${\mathcal{Y}}''_s\to \mathcal{Z}''_s$ is a good minimal model with the log canonical model of 
$({\mathcal{Y}'_s}, \mathcal{B}'_s+ (1-\frac{\epsilon}{2})\mathcal{E}'_s+d_0g^*\mu^*\mathcal{A}_s)$.
In particular, for any point $s\in S$ corresponding to $((X, B)\to (Z, A))\in \mathcal{P}'$ and $((Y,C)\to (Z, A))\in \mathcal{P}$ as in the assumption, by
\cite[Theorem~3.5.2]{KM98},
${\mathcal{Y}}''_s$ is isomorphic to $X$ in codimension one as they are both good minimal models of a same pair, and $\mathcal{Z}''_s\simeq \mathcal{Z}_s$ with $\mathcal{A}''_s\simeq\mathcal{A}_s$ where $\mathcal{A}''$ is the pullback of $\mathcal{A}$ on $\mathcal{Z}''$. 
Hence the family $\mathcal{Y}''\to (\mathcal{Z}'', \mathcal{A}'')\to S$ shows that $X\to (Z, A)$ is bounded modulo flops.
 \end{proof}

 
 \subsection{Descending logCY g-pairs of fixed index}
 The following proposition is a generalization of \cite[Proposition 6.3]{Bir19} to the setting of generalized pairs.

\begin{prop}[{see \cite[Lemma 4.2]{Has22}, \cite[Theorem~1.5]{FM20}, \cite[Theorem 1.2]{HL21}}]\label{prop:cbfindex}
Let $d,l$ be two positive integers. Then there exists a positive integer $l'$ depending only on $d$ and $l$ satisfying the following.

Assume that $(X,B+\bM)$ is a g-pair and $\pi:X\to Z$ is a contraction such that
\begin{enumerate}
 \item $(X,B+\bM)$ is lc of dimension $d$, and $\dim Z>0$,
 \item $X$ is of Fano type over $Z$, 
 \item $lB$ is integral and $l\bM$ is b-Cartier, and
 \item $K_X+B+\bM_X\sim_{\Qq,Z}0$.
\end{enumerate}
Then there is an lc g-pair 
$(Z,D+\bN)$ such that
$$l'(K_X+B+\bM_X)\sim l'\pi^*(K_Z+D+\bN_Z),$$ and $l'\bN$ is b-Cartier.
Moreover, if $(X, B+\bM)$ is klt, then $(Z,D+\bN)$ is klt.
\end{prop}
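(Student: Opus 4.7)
The plan is to apply the canonical bundle formula for generalized pairs (as developed in \cite{FM20, HL21}) to $\pi\colon X\to Z$, and then to bound the index of the resulting discriminant and moduli b-divisors using the theory of complements for generalized Fano type pairs.

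First, since $(X, B+\bM)$ is lc with $K_X+B+\bM_X\sim_{\mathbb{Q},Z}0$, the generalized canonical bundle formula produces a discriminant $\mathbb{Q}$-divisor $D$ on $Z$ and a b-nef b-$\mathbb{Q}$-divisor $\bN$ on $Z$ such that $(Z,D+\bN)$ is an lc g-pair and
$$
K_X+B+\bM_X\sim_{\mathbb{Q}}\pi^*(K_Z+D+\bN_Z).
$$
The klt assertion in the moreover clause follows from generalized inversion of adjunction, because a general fiber of $\pi$ is a klt g-pair with trivial log canonical class, so the discriminant $D$ has coefficients strictly less than $1$ along divisors dominated by klt centers.

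The main step is to bound the index of $(Z,D+\bN)$ in terms of $d$ and $l$. Restricting to a general fiber $F$ produces an lc g-pair $(F, B_F+\bM|_F)$ of Fano type with $K_F+B_F+\bM|_F\sim_{\mathbb{Q}}0$, $lB_F$ integral, and $l\bM|_F$ b-Cartier. By the boundedness of complements for generalized Fano type pairs of dimension at most $d$ (Birkar \cite{Bir19} for usual pairs, generalized to g-pairs in \cite{HL21}), there exists $n=n(d,l)$ such that $n(K_F+B_F+\bM|_F)\sim 0$. Combined with the ACC for log canonical thresholds on Fano type g-pairs, this forces the coefficients of $D$ to lie in a finite set depending only on $d$ and $l$, so that $n'D$ is integral for some $n'=n'(d,l)$. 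For the moduli part, the effective adjunction statement of \cite[Theorem 1.2]{HL21} (or the arguments of \cite{FM20, Has22}) provides a bounded positive integer so that a uniform multiple of $\bN$ is b-Cartier.

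Finally, taking $l'$ to be a common multiple of the integers produced above together with $l$, and enlarging if necessary to absorb torsion arising from upgrading $\sim_{\mathbb{Q}}$ to $\sim$, we conclude that $l'(K_X+B+\bM_X)\sim l'\pi^*(K_Z+D+\bN_Z)$ as required. The hard part throughout is the uniform control of the b-Cartier index of the moduli b-divisor $\bN$; this is precisely where the Fano type hypothesis on $\pi\colon X\to Z$ enters essentially, via the theory of complements for generalized Fano type pairs, which lifts the fiberwise index bound to a global bound on the whole base.
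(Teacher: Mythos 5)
Your overall route---the generalized canonical bundle formula combined with complement theory, with the uniform b-Cartier index of the moduli part taken from the effective adjunction results---is essentially the route the paper itself takes: the paper gives no independent argument for Proposition~\ref{prop:cbfindex}, presenting it as the g-pair analogue of \cite[Proposition~6.3]{Bir19} and citing \cite[Lemma~4.2]{Has22}, \cite[Theorem~1.5]{FM20}, \cite[Theorem~1.2]{HL21} for precisely the statement you defer to. So leaving the hardest point (a multiple of $\bN$ bounded in terms of $d,l$ being b-Cartier, together with its b-nefness) to those references is consistent with the paper's treatment.

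Two steps in your write-up are problematic, however. First, the claim that ACC for log canonical thresholds forces the coefficients of $D$ into a finite set, so that $n'D$ is integral for some bounded $n'$, is false: the discriminant coefficients only lie in a DCC set of hyperstandard type, and their denominators are unbounded because of multiplicities of vertical fibers (over a codimension-one point the coefficient has the shape $1-\frac{b^{+}-b}{m}$ with $m$ the multiplicity of a fiber component, which is not bounded in terms of $d,l$). Fortunately the proposition does not assert integrality of $l'D$, so this claim is dispensable---but if it was meant to feed into the linear equivalence $l'(K_X+B+\bM_X)\sim l'\pi^*(K_Z+D+\bN_Z)$, that derivation breaks down. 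Second, ``enlarging $l'$ to absorb torsion'' is vague exactly where the content of the proposition lies: the bounded torsion comes from your fiberwise $n$-complement, which gives $n(K_X+B+\bM_X)\sim 0$ over the generic point of $Z$; then $n(K_X+B+\bM_X)-\operatorname{div}(\alpha)$ is vertical and $\Qq$-linearly trivial over $Z$, hence the pullback of a $\Qq$-divisor $L$ on $Z$, and one sets $\bN_Z:=L-K_Z-D$, after which only the bounded b-Cartier index (and b-nefness) of $\bN$ remains and is supplied by the cited results. This is exactly the structure of \cite[Proposition~6.3]{Bir19}; spelled out this way your argument closes, but as written the ``absorb torsion'' sentence hides the very step being asserted. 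Minor points: the klt statement for $(Z,D+\bN)$ should be argued via all coefficients of the discriminant b-divisor being $<1$ on every birational model of $Z$, which follows from $(X,B+\bM)$ being klt over the generic point of each such divisor, rather than from the general fiber alone; and note that \cite{HL21} here is the Han--Liu canonical bundle formula paper, not a source for complements of generalized Fano type pairs, so that attribution should be adjusted.
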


As a corollary, we generalize Proposition~\ref{prop:cbfindex} to rationally connected fibrations of finite index instead of fibrations of Fano type.
 
 \begin{cor}\label{cor:cbfindex rc}
Let $d,l$ be two positive integers. Then there exists a positive integer $l'$ depending only on $d$ and $l$ which is divisible by $l$ and satisfies the following.

Assume that $(X,B+\bM)$ is a g-pair and $\pi:X\to Z$ is a contraction such that
\begin{enumerate}
 \item $(X,B+\bM)$ is lc of dimension $d$, and $\dim Z>0$,
 \item $\pi$ is rationally connected, 
 \item $l\bM$ is b-Cartier, and
 \item $l(K_X+B+\bM_X)\sim_{Z}0$.
\end{enumerate}
Then there is an lc g-pair 
$(Z,D+\bN)$ such that
$$l'(K_X+B+\bM_X)\sim l'\pi^*(K_Z+D+\bN_Z),$$ and $l'\bN$ is b-Cartier. Moreover, if $(X, B+\bM)$ is klt, then $(Z,D+\bN)$ is klt.
\end{cor}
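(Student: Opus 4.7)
\emph{Plan.} I would reduce this to Proposition~\ref{prop:cbfindex} by closing two gaps between its hypotheses and ours: (i)~Proposition~\ref{prop:cbfindex} requires $lB$ to be integral and $K_X+B+\bM_X\sim_{\Qq,Z}0$, while we have $l\bM$ b-Cartier and $l(K_X+B+\bM_X)\sim_Z 0$; (ii)~Proposition~\ref{prop:cbfindex} requires $X$ to be of Fano type over $Z$, while we only know $\pi$ is rationally connected. Gap~(i) is immediate: the hypothesis $l(K_X+B+\bM_X)\sim_Z 0$ forces $l(K_X+B+\bM_X)$ to be Cartier, and combined with $l\bM_X$ being Cartier and $lK_X$ being integral as a Weil divisor, this shows that $lB$ is integral.

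The main step is to produce a birational morphism $\phi\colon X_1\to X$ such that $(X_1,B_1+\bM)$ is an lc g-pair with $K_{X_1}+B_1+\bM_{X_1}=\phi^*(K_X+B+\bM_X)$, $lB_1$ integral, $l\bM$ b-Cartier, $X_1$ of Fano type over $Z$, and klt whenever $(X,B+\bM)$ is. The plan is to first pass to a $\Qq$-factorial dlt modification; integrality of $lB_1$ is preserved because the new exceptional coefficients are all~$1$. Then, using that $\pi_1\colon X_1\to Z$ is rationally connected with $K_{X_1}+B_1+\bM_{X_1}\sim_{\Qq,Z}0$, I would run a suitable relative MMP exploiting rational connectedness to reach a Fano-type-over-$Z$ model. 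The hard part will be this second reduction: one must exploit the rational connectedness of the fibration to produce a $\pi_1$-big anticanonical divisor on an appropriate birational model, upgrading the merely pseudo-effective $-K_{X_1}$ over $Z$ coming from $-K_{X_1}\sim_{\Qq,Z}B_1+\bM_{X_1}$.

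Once $X_1$ is of Fano type over $Z$, Proposition~\ref{prop:cbfindex} supplies an integer $l'$ depending only on $d$ and $l$ (we arrange $l\mid l'$ by replacing $l'$ with $\mathrm{lcm}(l,l')$) and an lc g-pair $(Z,D+\bN)$ with $l'\bN$ b-Cartier, such that $l'(K_{X_1}+B_1+\bM_{X_1})\sim l'\pi_1^*(K_Z+D+\bN_Z)$, and with $(Z,D+\bN)$ klt whenever $(X_1,B_1+\bM)$ is. Pushing forward via $\phi$, and using that $\phi^*$ is injective on linear equivalence classes of Cartier divisors on $X$, yields $l'(K_X+B+\bM_X)\sim l'\pi^*(K_Z+D+\bN_Z)$; the relative triviality $l'\pi^*(K_Z+D+\bN_Z)\sim_Z 0$ is tautological since the right-hand side is already a pullback from $Z$.
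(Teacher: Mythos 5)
Your reduction of gap~(i) (integrality of $lB$) is fine and matches the paper's opening remark. The problem is your ``main step'': producing a crepant birational morphism $\phi\colon X_1\to X$ with $X_1$ of Fano type over $Z$ is in general \emph{impossible}, and precisely in the case that the rational-connectedness hypothesis is meant to handle. If $K_X$ is pseudo-effective over $Z$, then on a general fiber $F$ one has $B|_F=0$ and $K_F\equiv \bM_X|_F\equiv 0$ (Lemma~\ref{lem exist E}(1)), i.e.\ $F$ is a rationally connected klt variety with numerically trivial canonical class. For any crepant model $X_1\to X$ as you propose, $-K_{F_1}\equiv (B_1+\bM_{X_1})|_{F_1}$ pushes forward to $-K_F\equiv 0$ under the birational morphism $F_1\to F$; since the pushforward of a big divisor under a birational morphism is big, $-K_{F_1}$ can never be big, so $X_1$ is never of Fano type over $Z$. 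No relative MMP can ``upgrade'' pseudo-effectivity of $-K$ over $Z$ to bigness here, because the obstruction lives on the general fiber and is birationally invariant in the sense just described. So the key idea is missing, not merely unproved.

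The paper's route is genuinely different and is the one you need: it argues by induction on the relative dimension $\dim X-\dim Z$ and never makes $X$ Fano type over $Z$ itself. If $K_X$ is not pseudo-effective over $Z$, one runs a $K_X$-MMP over $Z$ to a Mori fiber space $X'\to Z'$ over $Z$; Proposition~\ref{prop:cbfindex} is applied only over the \emph{intermediate} base $Z'$ (where $X'$ is of Fano type), and the resulting klt logCY g-pair $(Z',D'+\bN')\to Z$, which is again a rationally connected fibration of bounded index and of smaller relative dimension, is handled by the inductive hypothesis. If $K_X$ is pseudo-effective over $Z$, the crucial input is \cite[Lemma~6.3]{Jia21}: the rationally connected klt fiber with $K_F\equiv 0$ is non-canonical, so there is a prime divisor $E_0$ over $X$ with $a(E_0,X,B)<1$ dominating $Z$ (Lemma~\ref{lem exist E}(2)); extracting $E_0$ by \cite[Corollary~1.4.3]{BCHM10} yields a crepant model $Y$ on which $K_Y$ is no longer pseudo-effective over $Z$, reducing to the first case. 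Your final pushforward step and the arrangement $l\mid l'$ would be fine once such a structure is in place, but without this two-case induction (and the non-canonicity input) the proof does not go through.
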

 \begin{proof}
 Note that in the assumption $l(K_X+B+\bM_X)$ is Cartier and $l\bM$ is b-Cartier, so $lB$ is automatically integral.

 We prove the statement by induction on $\dim X-\dim Z$. If $\dim X=\dim Z$, then $\pi$ is birational and we may take $D=\pi_{*}B$, ${\bf{N}}={\bf{M}}$, and $l'=l$.
From now on, suppose that $\dim X-\dim Z>0$. By \cite[Proposition 3.9]{HL18}, possibly replacing $(X,B+\bM)$ with its dlt model, we may assume that $(X, B+\bM)$ is $\mathbb{Q}$-factorial dlt. In particular, $X$ is $\mathbb{Q}$-factorial klt.

 \medskip

{\bf Case 1}. $K_X$ is not pseudo-effective over $Z$.

We can run a $K_X$-MMP over $Z$ to get a Mori fiber space $\pi': X'\to Z'$ over $Z$. 
Then by the negativity lemma, $(X', B'+\bM)$ is lc with $l(K_{X'}+B'+\bM_{X'})\sim_{Z}0$. Here $(X', B'+\bM)$ is klt if $(X, B+\bM)$ is klt. 
By applying Proposition~\ref{prop:cbfindex} to $X'\to Z'$, there exists a constant $l''$ depending only on $d,l$ and an lc g-pair $(Z', D'+{\bf N}')$,
such that 
$$l''(K_{X'}+B'+\bM_{X'})\sim l''\pi'^*(K_{Z'}+D'+\bN'_{Z'}),$$ and $l''\bN'$ is b-Cartier. Here $(Z', D'+{\bf N}')$ is klt if $(X', B'+\bM)$ is klt.
We may assume that $l$ divides $l''$, so $l''(K_{Z'}+D'+\bN'_{Z'})\sim_Z 0$. 
Also $Z'\to Z$ is rationally connected as its general fibers are dominated by those of $\pi$. 
So we conclude the statement by applying the inductive hypothesis to $(Z', D'+\bN')\to Z$.

 \medskip

{\bf Case 2}. $K_X$ is pseudo-effective over $Z$.

In this case, 
by Lemma~\ref{lem exist E}, there exists a prime divisor ${E_0}$ over $X$ such that $a({E_0}, X, B)<1$ and ${E_0}$ dominates $Z$.
By \cite[Corollary 1.4.3]{BCHM10}, there is a projective birational morphism $h: Y \to X$ extracting only ${E_0}$. Note that $a=a({E_0}, X, B+\bM)= a({E_0}, X, B)<1$ where the equality is from Lemma~\ref{lem exist E}(2), so we may write
\[
K_Y+B_Y+(1-a){E_0}+\bM_Y = h^* (K_X+B+\bM_X),
\]
where $B_Y$ is the strict transform of $B$ on $Y$.
Here $Y\to Z$ is rationally connected as its general fibers are birational to those of $X\to Z$. 
It is clear that $(Y, B_Y+(1-a){E_0}+\bM)\to Z$ satisfies all conditions of Corollary~\ref{cor:cbfindex rc}, and $(Y, B_Y+(1-a){E_0}+\bM)$ is klt if $(X, B+\bM)$ is klt. As $E_0$ dominates $Z$, $K_Y$ is not pseudo-effective over $Z$, and hence we may apply Case 1 to $(Y, B_Y+(1-a){E_0}+\bM)$ to conclude the statement.
 \end{proof}
 
 \begin{lem}\label{lem exist E}
 Let $\pi: X\to Z$ be a rationally connected fibration and $(X, B+\bM)$ a $\mathbb{Q}$-factorial lc g-pair such that $X$ is klt and $K_X+B+\bM_X\equiv_Z 0$. Suppose that $K_X$ is pseudo-effective over $Z$. Then the following statements hold: 
 \begin{enumerate}
 \item for a general fiber $F$ of $Z$, $B_F=0$ and $K_F\equiv \bM_X|_F\equiv 0$;
 
 \item there exists a prime divisor ${E_0}$ over $X$ such that $a({E_0}, X, B)<1$, ${E_0}$ dominates $Z$, and for any birational model $h: Y\to X$ on which ${E_0}$ is a divisor, $\mult_{E_0} (h^*\bM_X-\bM_Y)=0$. 
 \end{enumerate}
 
 \end{lem}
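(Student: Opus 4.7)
\textbf{For (1),} I would restrict the relation $K_X+B+\bM_X\equiv_Z 0$ to a general fiber $F$ of $\pi$. Since $K_X$ is pseudo-effective over $Z$, $K_F=K_X|_F$ is pseudo-effective; since $\bM$ is b-nef, the descent model on which $\bM$ is nef is an isomorphism above a general point of $Z$, so $\bM_X|_F$ is nef (hence pseudo-effective). Thus $-K_F\equiv B|_F+\bM_X|_F$ is pseudo-effective as a sum of an effective divisor and a nef divisor, and combined with $K_F$ pseudo-effective this forces $K_F\equiv 0$. The resulting $B|_F+\bM_X|_F\equiv 0$, being a sum of two pseudo-effective classes, splits into $B|_F\equiv 0\equiv\bM_X|_F$; effectivity and numerical triviality of $B|_F$ yield $B|_F=0$.

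\textbf{For (2),} the plan is to pass to the generic fiber over $Z$. By (1), $(X_\eta,\bM_\eta)$ is an lc g-pair with $K_{X_\eta}+\bM_{X_\eta}\equiv 0$ over $k(\eta)$, where $X_\eta$ is klt and rationally connected, and $K_{X_\eta}$ is pseudo-effective. The crucial observation is that $X_\eta$ cannot have canonical singularities: otherwise, taking a resolution $\mu\colon X'_\eta\to X_\eta$ gives $K_{X'_\eta}=\mu^*K_{X_\eta}+\sum b_iE_i$ with all $b_i\geq 0$, making $K_{X'_\eta}$ pseudo-effective on the smooth variety $X'_\eta$; but $X'_\eta$ is rationally connected, hence uniruled, contradicting BDPP. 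Therefore there is a prime divisor $E_0^\eta$ over $X_\eta$ with $a(E_0^\eta,X_\eta,0)<1$. Spreading $E_0^\eta$ out to the closure of its center yields a prime divisor $E_0$ over $X$ dominating $Z$ with $a(E_0,X,0)=a(E_0^\eta,X_\eta,0)<1$, and hence $a(E_0,X,B)\leq a(E_0,X,0)<1$ as $B\geq 0$.

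For the $\bM$-condition, I would fix a birational model $h_0\colon Y_0\to X$ on which $\bM_{Y_0}$ is nef and $\bM=\overline{\bM_{Y_0}}$. By the negativity lemma applied to $\bM_{Y_0}$ (which is $h_0$-nef), the divisor $E^0:=h_0^*\bM_X-\bM_{Y_0}$ is effective and $h_0$-exceptional, so $h_0(\Supp E^0)\subset X$ has codimension $\geq 2$. For any birational model $h\colon Y\to X$ on which $E_0$ is a divisor (passing to a common resolution with $Y_0$ if needed, so $Y$ factors through $Y_0$ via some $p\colon Y\to Y_0$), the identity $h^*\bM_X-\bM_Y=p^*E^0$ shows that $\mult_{E_0}(h^*\bM_X-\bM_Y)=0$ amounts to the center of $E_0$ on $Y_0$ being disjoint from $\Supp E^0$, equivalently the center of $E_0$ on $X$ lying outside $h_0(\Supp E^0)$. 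I would therefore arrange the choice of $E_0^\eta$ so that its center on $X$ lies in the non-canonical locus of $X_\eta$ but outside $h_0(\Supp E^0)\cap X_\eta$.

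\textbf{Main obstacle.} The delicate step is the final choice: ensuring that the non-canonical locus of $X_\eta$ is not entirely contained in the codimension-$\geq 2$ set $h_0(\Supp E^0)\cap X_\eta$. I expect to overcome this by exploiting the abundance of non-canonical valuations of $X_\eta$ once any such valuation exists: further blowups over non-canonical points produce infinitely many new valuations with log discrepancy $<1$, and since these are centered on an intrinsic subvariety of $X_\eta$ independent of the choice of $Y_0$, one can always arrange the center to avoid the prescribed codimension-$\geq 2$ locus, possibly after refining $Y_0$.
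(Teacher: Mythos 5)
Parts (1) and the existence of a divisor $E_0$ with $a(E_0,X,B)<1$ dominating $Z$ are essentially the paper's own argument (your BDPP step is precisely the content of the cited lemma of Jiang that the paper invokes), with one slip worth noting: the model on which $\bM$ descends need not be an isomorphism over a general point of $Z$, so you cannot conclude that $\bM_X|_F$ is nef; what is true, and all that is needed, is that $\bM_X|_F$ is pseudo-effective, being the pushforward of a nef divisor from the corresponding general fiber of the descent model.

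The genuine gap is the last condition, $\mult_{E_0}(h^*\bM_X-\bM_Y)=0$, exactly where you flag your ``main obstacle'', and the workaround you sketch does not work. Every divisor $E$ with $a(E,X,B)<1$ has its center on $X$ contained in the non-canonical locus of $(X,B)$ over the generic point of $Z$; this is a fixed closed subset that may well be entirely contained in the codimension $\geq 2$ set $h_0(\Supp E^0)$ (it can, for instance, be a single point of the general fiber), and further blowups over non-canonical points only produce valuations whose centers on $X$ lie in that same locus. Refining $Y_0$ also changes nothing: since $\bM$ already descends on $Y_0$, on any higher model $Y_1\to Y_0$ one has $h_1^*\bM_X-\bM_{Y_1}$ equal to the pullback of $h_0^*\bM_X-\bM_{Y_0}$, so the image of its support in $X$ is unchanged. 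Hence the choice of $E_0$ you hope for may not exist (and your criterion ``center disjoint from $\Supp E^0$'' should in any case be ``center not contained in $\Supp E^0$''). The point you are missing, which is how the paper concludes, is that no such choice is needed: the multiplicity in question is independent of the model on which $E_0$ appears, so one may take $Y$ high enough that $\bM_Y$ is nef; then $G:=h^*\bM_X-\bM_Y\geq 0$ by negativity, and restricting to a general fiber $F_Y$ of $Y\to Z$, part (1) gives $\bM_Y|_{F_Y}+G|_{F_Y}\equiv 0$ with $\bM_Y|_{F_Y}$ nef and $G|_{F_Y}$ effective, forcing $G|_{F_Y}=0$; thus no component of $G$ dominates $Z$, and since $E_0$ does dominate $Z$, $\mult_{E_0}G=0$ holds automatically for the divisor $E_0$ you already constructed.
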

 \begin{proof}
 On a general fiber $F$ of $\pi$, $K_F$ and $\bM_X|_F$ are pseudo-effective and $K_F+B|_F+\bM_X|_F\equiv 0$. So $B|_F=0$ and $K_F\equiv \bM_X|_F\equiv 0$. 
 
 Note that $F$ is rationally connected and $K_F\equiv 0$, by \cite[Lemma 6.3]{Jia21}, $F$ is not canonical. Hence by adjunction, $(X, B)$ is not canonical over a non-empty open subset of $Z$. Thus there exists a prime divisor ${E_0}$ over $X$ such that $a({E_0}, X, B)<1$ and ${E_0}$ dominates $Z$. 
 
 To show the last statement, we may assume that $h: Y\to X$ is a sufficiently high model such that $\bM_Y$ is nef. Then by the negativity lemma, 
 $\bM_Y+G=h^*\bM_X$ where $G\ge0 $ is $h$-exceptional.
 Restricting on a general fiber $F_Y$ of $Y\to Z$, we have $\bM_Y|_{F_Y}+G|_{F_Y}=h^*(\bM_X|_F)\equiv 0. $ 
 This implies that $\bM_Y|_{F_Y}\equiv 0$ and $G|_{F_Y}=0$. Therefore, $\mult_{E_0} G=0$ as ${E_0}|_{F_Y}\neq 0$.
 \end{proof}

 \subsection{Lifting morphisms via flops}
 
When we consider a contraction $X\to W$ where $W$ belongs to a family which is bounded modulo flops, we use the following lemma to replace this contraction with a birational modification so that $W$ could be assumed to be in a bounded family. The proof is well-known to experts (cf. \cite[Lemma~3.3]{DCS21}).

\begin{lem}\label{lift flops}
Let $Z$ be a projective variety and
let $X$ and $W$ be normal projective varieties over $Z$. Suppose the following conditions hold:
\begin{enumerate}
    \item $\pi:X\to W$ is a contraction of Fano type and $W$ is $\mathbb{Q}$-factorial;
    \item  $(X, B+{\bf M})$ and $(W, D+{\bf N})$ are klt g-pairs such that $$K_X+B+\bM_X\equiv_Z \pi^*(K_W+D+\bN_W)\equiv_{Z}0;$$
    \item there is a small birational map $\phi_W: W\dashrightarrow W'$ to another normal projective variety $W'$ over $Z$. 
\end{enumerate}
Then there exists a $\mathbb{Q}$-factorial projective normal variety $X'$ with a birational contraction $\phi: X\dashrightarrow X'$ and a contraction $\pi': X'\to W'$ of Fano type such that $\pi'\circ\phi=\phi_W\circ\pi$.
 \end{lem}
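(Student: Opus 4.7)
The plan is to construct $\phi$ by running a relative MMP on $X$ over $Z$ that parallels the $(K_W+D+\bN_W)$-trivial MMP on $W$ over $Z$ realizing $\phi_W$. Pick an ample effective $\Qq$-divisor $H'$ on $W'$ and let $H_W:=(\phi_W^{-1})_*H'$ be its strict transform on $W$; since $\phi_W$ is small and $W$ is $\Qq$-factorial, $H_W$ is an effective $\Qq$-Cartier divisor on $W$, and by construction $\phi_W$ is the ample model of $H_W$ over $Z$. For small $\eta>0$, $(W,D+\eta H_W+\bN)$ is klt, and by the MMP for klt g-pairs, the $(K_W+D+\eta H_W+\bN_W)$-MMP on $W$ over $Z$ exists, terminates, and (using $K_W+D+\bN_W\equiv_Z 0$) realizes $\phi_W$ as the ample model of $H_W$ over $Z$, through a sequence of $(K_W+D+\bN_W)$-trivial flops.

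To lift this to $X$, invoke the equivalent characterization Definition~\ref{defn RelFano}(2) of Fano type to pick a klt boundary $B_X$ on $X$ with $K_X+B_X\sim_{\Qq,W}0$ and $-K_X$ big over $W$, so that $(K_X+B_X)\cdot C=0$ for every curve $C$ contracted by $\pi$. Set $H_X:=\pi^*H_W$; then $(X,B_X+\eta H_X)$ is klt for small $\eta>0$, and the $(K_X+B_X+\eta H_X)$-MMP on $X$ over $Z$ can be built step by step using the Fano-type structure of $\pi$ together with the klt g-pair Calabi--Yau structure of $(W,D+\bN)$ over $Z$. Because both $K_X+B_X$ and $\eta H_X$ are trivial on the fibers of $\pi$, no curves inside such fibers are contracted; the contracted extremal rays are spanned by curves $C$ with $\pi_*C\neq 0$ and $H_W\cdot\pi_*C<0$ on the current birational model of $W$, matching precisely the extremal rays of the parallel $H_W$-MMP on $W$. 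Each step lifts the flop $W_i\dashrightarrow W_{i+1}$ over a common contracted model $T_i$ to a small birational modification $X_i\dashrightarrow X_{i+1}$ over $T_i$, produced by running a relative MMP on $X_i$ over $T_i$ via the Fano-type structure of $\pi_i:X_i\to W_i$.

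Hence the MMP on $X$ runs in lockstep with the $H_W$-MMP on $W$ and terminates when the latter reaches $W'$, producing a birational contraction $\phi:X\dashrightarrow X'$ with a contraction $\pi':X'\to W'$ such that $\pi'\circ\phi=\phi_W\circ\pi$; the boundary $B_X$ transports (under the lifted flops) to a klt boundary $B_{X'}$ on $X'$ with $K_{X'}+B_{X'}\sim_{\Qq,W'}0$ and $-K_{X'}$ big over $W'$, so that $\pi'$ is of Fano type. The main obstacle is the step-by-step construction of the lifted flop $X_i\dashrightarrow X_{i+1}$ in the absence of a direct Fano-type structure on $X$ over $Z$: this is done by locally running an MMP on $X_i$ over the common contracted model $T_i$ for the divisor $\pi_i^*H_{W_i}$, using that $\pi_i:X_i\to W_i$ remains Fano type through each flop and that the compatibility $\pi_i^*H_{W_i}=H_{X_i}$ forces the lifted MMP to match the $H_{W_i}$-MMP on $W_i$.
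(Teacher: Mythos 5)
Your proposal follows essentially the same route as the paper's proof: take the strict transform $H$ of an ample divisor on $W'$, use the klt Calabi--Yau g-pair structure on $W$ to run an $H$-MMP over $Z$ that decomposes $\phi_W$ into flips $W_i\dashrightarrow W_{i+1}$ over bases $T_i$, and lift each flip by running a relative MMP on $X_i$ over $T_i$ for $\pi_i^*H_{W_i}$, which is legitimate because $X_i\to T_i$ is of Fano type ($X_i\to W_i$ is of Fano type and $W_i\to T_i$ is birational), and whose output admits a morphism to $W_{i+1}$ since $W_{i+1}$ is the ample model of $H_{W_i}$ over $T_i$. Two caveats on your middle paragraph: the ``lockstep'' claims there are neither needed nor justified --- with $K_X+B_X$ only trivial over $W$ (so $K_X+B_X\sim_{\mathbb{Q}}\pi^*G$ for an uncontrolled $G$), the $(K_X+B_X+\eta H_X)$-negative rays over $Z$ are not the $H_W$-negative ones, a global MMP on $X$ over $Z$ is not available since $X$ need not be of Fano type over $Z$, and the lifted steps need not be small (the $\pi_i^*H_{W_i}$-MMP over $T_i$ may contract divisors, which is fine since only a birational contraction is required); moreover, the Fano-type property of $\pi'$ should be deduced from the standard fact that being of Fano type is preserved under relative MMP and under factoring through intermediate contractions (recalled in \S 2.3 of the paper), rather than by transporting $B_X$, whose klt-ness is not preserved by an MMP for a different divisor. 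Finally, first replace $X$ by a small $\mathbb{Q}$-factorialization so that the MMP outputs are $\mathbb{Q}$-factorial, as the statement requires.
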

 
\begin{proof}
The following simplified proof is provided by the referee.
 Replacing $X$ by a small $\mathbb{Q}$-factorialization \cite[Corollary~1.4.3]{BCHM10}, we may assume that $X$ is $\mathbb{Q}$-factorial.
 Pick an effective ample divisor $H'$ on $W'$ and denote by $H$ the strict transform of $H'$ on $W$. 
 Then $H$ is $\mathbb{Q}$-Cartier as $W$ is $\mathbb{Q}$-factorial, and it is big as $W$ and $W'$ are isomorphic in codimension one. 

Fix a sufficiently small positive real number $\delta$ such that 
 $(X, B+\delta \pi^*H+{\bf M})$ is klt.  
We claim that $B+\delta \pi^*H+{\bf M}_X$ is big over $Z$. In fact, as $\pi$ is a contraction of Fano type, $B+{\bf M}_X$ is big over $W$. So we may write $B+{\bf M}_X=A_X+E_X$ where $A_X$ is ample over $W$ and $E_X$ is an effective $\mathbb{R}$-divisor on $X$. On the other hand, we may write $H=A_W+E_W$ where $A_W$ is ample over $Z$ and $E_W$ is an effective $\mathbb{R}$-divisor on $W$. Then 
$$
B+\delta \pi^*H+{\bf M}_X=(1-\delta')(B+{\bf M}_X)+(\delta'A_X+\delta \pi^*A_W)+\delta'E_X+\delta \pi^*E_W
$$
is big over $Z$ because $\delta'A_X+\delta \pi^*A_W$ is ample over $Z$ for sufficiently small $\delta'>0$.

Then by \cite[Lemma~4.4]{BZ16}, $(X, B+\delta \pi^*H+{\bf M})$ has a good minimal model $X'$ over $Z$ and $W'$ is just the ample model as $\phi_W$ is small and
$$
K_X+B+\delta \pi^*H+{\bf M}_X\equiv_Z \delta \pi^*H=\delta \pi^*\phi_{W*}^{-1}H'.
$$
Hence $X'\to W'$ are the desired models. Here note that $-K_X'$ is big over $W'$ as $-K_X$ is big over $W$ and $\phi_W$ is birational. 
 \end{proof}


 \subsection{Proof of main theorems}

 \begin{lem}\label{lem 1.4 bir}Under the assumptions in Theorem~\ref{thm main rel},
 if there exists a birational contraction $X\dashrightarrow X'$ over $Z$ such that $X'\to (Z, A)$ is bounded in codimension one, then $X\to (Z, A)$ is bounded in codimension one.
 \end{lem}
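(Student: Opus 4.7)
The strategy is to reduce to a g-pair analogue of Lemma~\ref{bdd contraction}, via the following steps.

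First, I would replace $X'$ by a small $\Qq$-factorialization, which preserves boundedness in codimension one, and push $(X, B+\bM)$ forward along $\phi$ to obtain a klt g-pair $(X', B'+\bM)$ on $X'$ with $B':=\phi_*B$. Since $K_X+B+\bM_X\equiv_Z 0$ (as $l(K_X+B+\bM_X)\sim\pi^*L$ is trivial on fibers of $\pi$) and $\phi$ is a birational contraction over $Z$, the map $\phi$ is $(K+B+\bM)$-trivial over $Z$, and by the negativity lemma log discrepancies are preserved, so $(X', B'+\bM)$ is still klt; moreover $l(K_{X'}+B'+\bM_{X'})\sim (\pi')^*L$ for the induced contraction $\pi':X'\to Z$. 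Hence $(X', B'+\bM)\to(Z,A)$ still satisfies the hypotheses of Theorem~\ref{thm main rel} with the same $d,l,r$.

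Next, take a common log resolution $f:Y\to X$, $g:Y\to X'$ on which $l\bM$ descends to a Cartier nef divisor $l\bM_Y$, and write $K_Y+B_Y+\bM_Y=f^*(K_X+B+\bM_X)$. Since $\phi^{-1}$ contracts no divisor, every $f$-exceptional prime divisor on $Y$ is automatically $g$-exceptional. Let $F$ be a $(1-\tfrac{1}{l})$-weighted sum of the $f$-exceptional prime divisors not already in $\Supp B_Y$, and set $C:=B_Y+F$. Then $(Y, C+\bM)$ is klt with $lC$ integral, $\Supp C$ contains every $f$-exceptional prime divisor, and $(Y, C+\bM)\to(Z, A)$ is a weak klt logCY g-pair fibration. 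Since $X'\to(Z, A)$ is bounded in codimension one, the family of associated log resolutions $Y\to X'$, and hence $(Y, C+\bM)\to (Z, A)$, is bounded in codimension one after a Noetherian decomposition of the base.

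Finally, $X$ is recovered from $Y$ as a good minimal model over $Z$ of $(Y, C+\bM+d_0(\pi\circ f)^*A)$ for $d_0>2d$, using Lemma~\ref{lem L+A ample}. Indeed, $K_Y+C+\bM_Y\equiv_Z F\geq 0$, so the $(K_Y+C+\bM_Y+d_0(\pi\circ f)^*A)$-MMP over $Z$ contracts precisely the $f$-exceptional divisors $F$ and arrives at a variety isomorphic to $X$ in codimension one. This good minimal model can be constructed uniformly in families, exactly as in the proof of Lemma~\ref{bdd contraction}, yielding $X\to(Z,A)$ bounded in codimension one. The main obstacle is adapting the MMP-in-families argument from the ordinary-pair setting of Lemma~\ref{bdd contraction} to the g-pair setting, which requires the length-of-extremal-rays for g-pairs (\cite[Proposition~3.13]{HL18}), existence of good minimal models for klt log smooth g-pairs in families (the g-pair analogue of \cite[Theorem~1.2]{HMX18}), and careful bookkeeping of the nef b-divisor $\bM$ throughout.
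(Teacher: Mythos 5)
The central gap is your assertion that, because $X'\to(Z,A)$ is bounded in codimension one, ``the family of associated log resolutions $Y\to X'$, and hence $(Y,C+\bM)\to(Z,A)$, is bounded in codimension one after a Noetherian decomposition of the base.'' Nothing in your plan justifies this, and it is exactly the crux of the lemma: $Y$ is a \emph{common} resolution of $X$ and $X'$, so it must extract every divisor on $X$ that is exceptional over $X'$, and these divisors are data coming from the unknown variety $X$, not from the bounding family of $X'$. From the point of view of $X'$ they are merely divisors of log discrepancy at most $1$ with respect to $(X',B'+\bM)$, of which there are in general infinitely many, so neither their number nor their position is controlled by the family containing $X'$; producing $Y$ ``in the family'' would presuppose the boundedness of $X$ modulo flops that you are trying to prove. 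The paper's proof supplies precisely the missing mechanism: by \cite[Lemma~4.5]{BZ16} one extracts exactly the divisors on $X$ exceptional over $X'$, obtaining $g\colon X''\to X'$ with $X''$ isomorphic to $X$ in codimension one and $K_{X''}+B''+\bM_{X''}=g^*(K_{X'}+B'+\bM_{X'})$; since $(X'',B''+\bM)$ is $\tfrac1l$-lc and the birational morphism $X''\to X'$ is of Fano type, $(X'',B''+\bM)\to(X',H')$ is a $(d,r_0,\tfrac1l)$-logCY fibration of Fano type over the already-bounded polarized base $(X',H')$, so Theorem~\ref{thm: bir18 special} bounds $X''\to(X',H')$, and Lemma~\ref{lem bounded chain} then bounds $X''\to(Z,A)$, which is the desired statement for $X$. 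It is this use of the fixed index $\tfrac1l$ together with Birkar's theorem over the base $(X',H')$ that controls the extracted divisors; your outline has no substitute for it.

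Even granting boundedness of $Y$, two further points would need repair. First, if $(X,B+\bM)$ has exceptional divisors of log discrepancy greater than $1$, the crepant pullback $B_Y$ has negative coefficients, so your $C=B_Y+F$ need not be effective and $(Y,C+\bM)\to(Z,A)$ is not a weak logCY fibration in the sense of Definition~\ref{defn cy fibration}; moreover, for $f$-exceptional divisors already lying in $\Supp B_Y$ you add nothing, so their log discrepancies do not strictly increase under $f$ and $X$ is then not a (good) minimal model of $(Y,C+\bM+d_0(\pi\circ f)^*A)$ in the sense needed for the uniqueness-in-codimension-one argument; in Lemma~\ref{bdd contraction} this is handled by raising all relevant exceptional coefficients to $1-\tfrac{\epsilon}{2}$. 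Second, your reduction relies on a g-pair analogue of \cite[Theorem~1.2]{HMX18} for running the MMP in families, an ingredient the paper deliberately avoids by keeping $\bM=0$ in Lemma~\ref{bdd contraction} and by dealing with the b-divisor only through Theorem~\ref{thm: bir18 special}.
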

 \begin{proof}
 After replacing $X'$ by its birational model, we may assume that $X'\to (Z, A)$ is bounded.
 Then there exists a positive integer $r_0$ independent of $X'$ and a very ample divisor $H'$ on $X'$ such that  $H'^d\leq r_0$ and $H'-\eta^*A$ is ample, where $\eta: X'\to Z$ is the natural morphism.

Denote by $B'$ the strict transform of $B$ on $X'$. Then $l(K_{X'}+B'+\bM_{X'})\sim \eta^* L$. By the negativity lemma,
 for any prime divisor $E$ on $X$ which is exceptional over $X'$, 
 $a(E, X', B'+{\bf M})=a(E, X, B+{\bf M})\leq 1$. 
 So by \cite[Lemma~4.5]{BZ16}, there is a projective birational morphism $g: X''\to X'$ extracting exactly all prime divisors on $X$ which are exceptional over $X'$. In particular, $X''$ is isomorphic to $X$ in codimension one. We may write 
 $$
 K_{X''}+B''+{\bf M}_{X''}=g^*(K_{X'}+B'+{\bf M}_{X'}).
 $$
 Then $(X'', B''+{\bf M})$ is $\frac{1}{l}$-lc and $X''\to X'$ is of Fano type (\cite[\S 2.13(7)]{Bir19}).
 So $(X'', B''+{\bf M})\to (X', H')$ is a $(d, r_0, \frac{1}{l})$-log CY fibration of Fano type.
 By applying Theorem~\ref{thm: bir18 special}, $X''\to (X', H')$ belongs to a bounded family.
Then $X''\to (Z, A)$ belongs to a bounded family by Lemma~\ref{lem bounded chain} and the construction of $H'$, which shows that $X\to (Z, A)$ is bounded in codimension one. 
 \end{proof}

 \begin{proof}[Proof of Theorem~\ref{thm main rel}]
We prove the statement by induction on $\dim X-\dim Z$. If $\dim X=\dim Z$, then $\pi$ is birational and hence $-K_X$ is big over $Z$. So the statement follows from Theorem~\ref{thm: bir18 special}. Now suppose that $\dim X>\dim Z$.

 Replacing $X$ by a small $\mathbb{Q}$-factorialization by \cite[Lemma~4.5]{BZ16}, we may assume that $X$ is $\mathbb{Q}$-factorial. In particular, $X$ is klt.
 
 \medskip

 
{\bf Case 1.} $K_X$ is not pseudo-effective over $Z$. 

 In this case, we can run a $K_X$-MMP over $Z$ to get a Mori fiber space $X'\to Z'$ over $Z$ with fibrations $\pi': X'\to Z'$ and $f:Z'\to Z$. Here $Z'$ is $\mathbb{Q}$-factorial as $X'$ is  $\mathbb{Q}$-factorial.
 As $l(K_X+B+{\bf M}_X)\sim \pi^*L$, $l(K_{X'}+B'+{\bf M}_{X'})\sim \pi'^*f^*L $ where $B'$ the strict transform of $B$ on $X'$,
  and $(X', B'+\bM)$ is klt by the negativity lemma.
 
 $$\xymatrix@R=2em{
X\ar@{.>}[rr] \ar[rdd]_{\pi} & & X' \ar[d]_{\pi'}\\
& & Z'\ar[dl]_{f}\\
&Z &}
$$

 If $\dim Z'=\dim Z=0$, then certainly $Z'\to (Z, A)$ is bounded. If $\dim Z'>0$, then by Corollary~\ref{cor:cbfindex rc}, there exists a positive integer $l'$ depending only on $d, l$ such that there exists a klt g-pair $(Z', D'+{\bf N}')$ with
 $l'(K_{Z'}+D'+{\bf N}'_{Z'})\sim \frac{l'}{l}f^*L$ and $l'{\bf N}'$ b-Cartier. Also $\dim Z'<\dim X$ and $Z'\to Z$ is rationally connected. Hence by induction on dimension, we may assume that $Z'\to (Z, A)$ is bounded in codimension one. By Lemma~\ref{lift flops}, after replacing $X'$ and $Z'$ by their birational models, we may assume that 
 \begin{enumerate}
 \item $X\dashrightarrow X'$ is a birational contraction,
 \item $\pi': X'\to Z'$ is of Fano type, and
 \item $f: Z'\to (Z, A)$ belongs to a bounded family.
 \end{enumerate}
Note that after the replacement, it remains true that  $l(K_{X'}+B'+{\bf M}_{X'})\sim \pi'^*f^*L$ and
  $(X', B'+\bM)$ is klt.

 As $Z'\to (Z, A)$ is bounded, there exists a very ample divisor $A'\ge0$ on $Z'$ and a positive integer $r'$ independent of $Z'$ such that $A'^{\dim Z'}\leq r'$ and $A'-f^*A$ is ample. In particular, $A'-\frac{1}{l}f^*L$ is ample and $(X', B'+\bM)\to (Z', A')$ is a $(d, r', \frac{1}{l})$-log CY fibration of Fano type. So
 by Theorem~\ref{thm: bir18 special}, such $X'\to (Z', A')$ belongs to a bounded family. 
 Then $X'\to (Z, A)$ belongs to a bounded family by Lemma~\ref{lem bounded chain} and the construction of $A'$. 
 Therefore, $X\to (Z, A)$
 is bounded in codimension one by Lemma~\ref{lem 1.4 bir}.
 

 \medskip
 
{\bf Case 2.} $K_X$ is pseudo-effective over $Z$.


In this case, by Lemma~\ref{lem exist E}, $(X, B)$ is a klt pair with $K_F+B|_F\equiv 0$ for a general fiber $F$ of $\pi$. By \cite[Theorem~2.12]{HX13}, $(X, B)$ has a good minimal model ${X'}\to Z'$ over $Z$, where $Z'$ is the log canonical model. Hence $K_{X'}+{B'}\sim_{\mathbb{Q}, Z'} 0$ where ${B'}$ is the strict transform of $B$ on ${X'}$.  Note that $Z'$ is birational to $Z$ as $K_F+B|_F\equiv 0$ for a general fiber $F$ of $\pi$. 
$$\xymatrix@R=2em{
X
\ar@{.>}[rr] \ar[rdd]_{\pi} & & {X'} \ar[d]_{\pi'}\\
& & Z'\ar[dl]_{f}\\
&Z &}
$$


It is clear that $({X'}, B'+\bM)\to (Z, A)$ satisfies the same conditions as $(X, B+\bM)\to (Z, A)$. 
By Lemma~\ref{lem 1.4 bir}, it suffices to show that $X'\to (Z, A)$ is 
bounded in codimension one. 

Denote the induced fibrations by $\pi': {X'}\to Z'$ and $f: Z'\to Z$.
As $Z'\to Z$ is birational and $\pi$ is rationally connected, $\pi'$ is rationally connected. So by applying Corollary~\ref{cor:cbfindex rc} to $\pi': {X'}\to Z'$, there exists a constant $l'$ depending only on $d,l$ and a klt g-pair $(Z', D'+{\bf N}')$
such that 
$$l'(K_{X'}+{B'}+\bM_{X'})\sim l'\pi'^*(K_{Z'}+D'+\bN'_{Z'})\sim\frac{l'}{l}\pi'^*f^*L,$$ and $l'\bN'$ is b-Cartier. 
In particular, $({Z'}, D'+\bN')\to (Z, A)$ is a $(\dim Z', r, \frac{1}{l'})$-logCY fibration. Moreover, $Z'$ is of Fano type over $Z$ as $-K_{Z'}$ is big over $Z$. 
So by Theorem~\ref{thm: bir18 special}, $Z'\to (Z, A)$ belongs to a bounded family. Then there exists a positive integer $r'$ independent of $Z'$ and a very ample divisor $A'\ge0$ on $Z'$ such that $A'^{\dim Z'}\leq r'$ and $A'-f^*A$ is ample. 
Therefore, $({X'}, {B'}+\bM)\to (Z', A')$ is a $(d, r', \frac{1}{l})$-logCY fibration. 

Now we have $K_{X'}+{B'}\sim_{\mathbb{Q}, Z'}0$ and $K_{{X'}}+{B'}+\bM_{X'}\sim_{\mathbb{Q}, Z'}0$, so $\bM_{X'}=\pi'^*L'$ where $L'$ is a pseudo-effective $\mathbb{Q}$-divisor on $Z'$.
Then $K_{X'}+{B'}\sim_{\mathbb{Q} }\pi'^*(\frac{1}{l}f^*L-L')$ and $A'-\frac{1}{l}f^*L+L'$ is pseudo-effective. 
In particular, $({X'}, {B'})\to (Z', A')$ is a weak $(d, r', \frac{1}{l})$-logCY fibration.

As ${X'}$ and $X$ are isomorphic over the generic point of $Z$, $K_{X'}$ is pseudo-effective over $Z'$, so by Lemma~\ref{lem exist E}, there exists a prime divisor ${E_0}$ over ${X'}$ such that $a=a({E_0}, {X'}, {B'})<1$ and ${E_0}$ dominates $Z'$.
By \cite[Corollary 1.4.3]{BCHM10}, there is a projective birational morphism $h: Y \to {X'}$ extracting only ${E_0}$. By Lemma~\ref{lem exist E} again, $\bM_Y=h^*\bM_{X'}$, so we may write
\begin{align}
 K_Y +B_Y +(1-a){E_0}{}&= h^* (K_{X'}+{B'});\label{eq 1}\\
 K_Y+B_Y+(1-a){E_0} +\bM_Y {}&= h^* (K_{X'}+{B'}+\bM_{X'}),\label{eq 2}
\end{align}
where $B_Y$ is the strict transform of ${B'}$ on $Y$.
Here $Y\to Z'$ is rationally connected as its general fibers are birational to those of ${X'}\to Z'$. 
So by \eqref{eq 2}, $(Y, B_Y+(1-a){E_0}+\bM)\to (Z', A')$ satisfies all conditions of Theorem~\ref{thm main rel} with integers $d, l, r'$. In particular, $l((1-a){E_0}+B_Y)$ is integral. As ${E_0}$ dominates $Z'$, $K_Y$ is not pseudo-effective over $Z'$. So we may apply Case 1 to $(Y, (1-a){E_0}+B_Y+\bM)\to (Z', A')$ to conclude that $Y\to (Z', A')$ is bounded in codimension one.
On the other hand, by \eqref{eq 1}, $(Y, (1-a){E_0}+B_Y)\to (Z', A')$ is a weak $(d, r', \frac{1}{l})$-logCY fibration with $l((1-a){E_0}+B_Y)$ integral. Hence we may apply Lemma~\ref{bdd contraction} to conclude that ${X'}\to (Z', A')$ is bounded in codimension one.
So by Lemma~\ref{lem bounded chain} and the construction of $A'$, ${X'}\to (Z, A)$ is bounded in codimension one.
 Therefore, $X\to (Z, A)$
 is bounded in codimension one by Lemma~\ref{lem 1.4 bir}.
 \end{proof}
 
 \begin{proof}[Proof of Theorem~\ref{thm main}]
 This follows from Theorem~\ref{thm main rel} by taking $Z$ to be a point.
 \end{proof}
 \begin{proof}[Proof of Theorem~\ref{thm: main pair rc modulo flop}]
 This follows from Theorem~\ref{thm main} by taking $M=-(K_X+B)$ and ${\bf M}=\overline{M}$.
 \end{proof}
 




\end{document}